\DeclareSymbolFont{largesymbol}{OMX}{yhex}{m}{n}
\DeclareMathAccent{\Widehat}{\mathord}{largesymbol}{"62}
\newcommand{\EEA}{\end{eqnarray}}
\newcommand{\BEA}{\begin{eqnarray}}
\newcommand{\peq}{p_{eq}}
\newcommand{\comment}[1]{}
\newcommand{\blue}[1]{{\color{black}#1}}
\newcommand{\td}{\textrm{d}}
\newcommand{\red}[1]{{\color{black}#1}}
\begin{document}

\title{A Parameter Estimation Method Using Linear Response Statistics
}


\author{      
 John Harlim \and        Xiantao Li  \and He Zhang 
}


\institute{John Harlim\at
             Department of Mathematics \\
             Department of Meteorology and Atmospheric Science \\
              The Pennsylvania State University \\
              \email{jharlim@psu.edu}           
           \and
           Xiantao Li \at
            Department of Mathematics \\
            The Pennsylvania State University  \\
              \email{xxl12@psu.edu}
	\and
	He Zhang\at
	      Department of Mathematics \\	
              The Pennsylvania State University \\
              \email{hqz5159@psu.edu} 
   }

\date{Received: date / Accepted: date}

\maketitle

\begin{abstract}
This paper presents a new parameter estimation method for It\^{o} diffusions such that the resulting model predicts the equilibrium statistics as well as the sensitivities of the underlying system to external disturbances. 
Our formulation does not require the knowledge of the underlying system, however we assume that the linear response statistics can be computed via the fluctuation-dissipation theory. The main idea is to fit the model to a finite set of ``essential" statistics that is sufficient to approximate the linear response operators. In a series of test problems, we will show the consistency of the proposed method in the sense that if we apply it to estimate the parameters in the underlying model, then we must obtain the true parameters. 

\keywords{Parameter estimation \and Fluctuation-dissipation theory \and Linear response}
\end{abstract}

\section{Introduction}\label{intro}

Modeling complex dynamical systems usually involves proposing a model based on some physical laws, estimating the model parameters from the available observed data, and verifying the results against observables. An important practical issue in modeling is that all models are subjected to error, whether it is due to incomplete physical understanding, misspecification of parameters, or numerical discretization. Regardless of how one chooses the models, a reliable method is needed to determine the parameters, so that the model would have the desired predictive capability. Standard approaches for estimating parameters are to fit the model to the observed data using the maximum likelihood estimate \cite{pavliotis:14} or the Bayesian inference scheme such as the Markov-Chain Monte Carlo methods \cite{gamerman:06}. One can also estimate some of the parameters by fitting the model to some equilibrium statistics \cite{delsole:1996}. In this paper, we propose a new parameter estimation method which can predict the equilibrium statistics, with  additional considerations that will be discussed next.

An important issue raised in \cite{mg:11a} is whether the resulting model is able to recover the sensitivities of the underlying system to extenal disturbances, in addition to the equilibrium statistics. What they showed is that in the presence of model error, there are intrinsic barriers to improve the model sensitivities even when the equilibrium statistics are fully recovered. In subsequent works \cite{mg:11b,mq:16a,qm:16a}, they designed a parameter estimation scheme to recover both the equilibrium statistics and the model sensitivities, and applied their method successfully to various high-dimensional geophysical turbulent problems. Their parameter estimation method is formulated as a minimization problem utilizing an information criterion that involves fitting the linear response operators for the mean and variance. 

Motivated by the positive results in their papers \cite{mg:11b,mq:16a,qm:16a}, we propose to infer the parameters of the It\^{o} diffusions by using the linear response statistics. In our formulation, we assume that the underlying unperturbed dynamics are unknown and that the linear response statistics can be computed via the fluctuation-dissipation-theory (FDT) \cite{mag:05}. Implicitly, this means that all the hypothesis in \cite{hm:10} are satisfied and the equilibrium density of the unperturbed dynamics is known. Of course, the equilibrium density functions for high dimensional problems may not be easily estimated if they are not available as pointed out in \cite{mq:16a}. In that case, one may want to relax the assumption of not knowing the underlying unperturbed dynamics and apply the ``kicked" response theory to approximate the linear response operator \cite{mg:11b,mq:16a}. 

The main departure of our approach with the one proposed in \cite{mg:11b,mq:16a,qm:16a} is as follows. Their method involves minimizing an information-theoretic functional that depends on both the mean and variance response operators. \comment{Note that these operators are infinite dimensional objects.} Our idea is to define a finite number of quantities, to be referred to as the \emph{essential statistics}, which would allow us to estimate the linear response operators, which in principle are infinite dimensional objects, of appropriate observables (beyond just the mean and variance). To be more specific, we use a rational approximation to estimate the linear response statistics and the coefficients in the rational functions are related to the essential statistics. Subsequently we estimate the parameters in the model by fitting to these essential statistics. In our approach, we formulate the parameter inference problem into a problem of solving a system of nonlinear equations that involve these essential statistics. 

The main goal of this paper is to demonstrate that this approach is a consistent parameter estimation method. This means that if we use the proposed method to estimate the parameters of the underlying model, then we must obtain the true parameters. With this goal in mind, we analyze the proposed approach on three examples. The first example is linear and Gaussian. The second example is a nonlinear stochastic differential equation \cite{majda2016} which was introduced as a prototype model for geophysical turbulence. In these two examples, we will show that all the parameters can be computed explicitly and the consistency of the parameter estimation method can be established. The third example is a Langevin dynamics of a particle driven by conservative, dissipative, and stochastic forces. In this example, we shall see that some parameters cannot be computed analytically. Sufficient conditions to achieve consistent parameter estimation will be discussed. 

The remaining part of the paper is organized as follows. In Section~2, we briefly review the fluctuation-dissipation theory for estimating the linear response statistics. In Section~3, we present the proposed parameter inference method using the linear response statistics and established the consistency condition for linear and Gaussian dynamics. In Section~4, we show applications on the nonlinear examples. In Section~5, we conclude the paper with a brief summary and discussion. Two Appendices are included to show some computational details that are left out in the main text.

\section{A Brief Overview of the Fluctuation-dissipation theory (FDT)}

The fluctuation-dissipation theory (FDT) is a mathematical framework for quantifying the response of a dynamical system  in the linear regime subject to small external forcing (see e.g., \cite{leith:75,mag:05}). Consider an $n$-dimensional stochastic dynamical system,
\BEA
dx = a(x,\theta)\,dt +b(x,\theta)dW_t,\label{SDE}
\EEA
where $a(x,\theta)$ denotes a vector field, $b(x,\theta)$ denotes a diffusion tensor, $W_t$ denotes standard Wiener processes, and $\theta$ denotes the model parameter. Here, we assume that the solutions of \eqref{SDE} can be characterized by a density function $p(x,t)$. We also assume that the system in \eqref{SDE} is ergodic with equilibrium density $\peq(x)$. Consider an external perturbation of the following form, $f(x,t) = c(x)\delta f(t)$ of order-$\delta$ where $0<\delta\ll 1$, such that the solutions of the perturbed dynamics, 
\BEA
dx = \big(a(x,\theta) + c(x)\delta f(t)\big)\,dt +b(x,\theta)dW_t,\nonumber
\EEA
can be characterized by a perturbed density $p^\delta(x,t)$. By perturbation theory (see \cite{mag:05} for details), assuming that the system is initially at equilibrium, the difference between the perturbed and unperturbed statistics of any integrable function $A(x)$ can be estimated by a convolution integral up to order-$\delta^2$, 
\BEA
\delta \mathbb{E}[A](t) = \mathbb{E}_{p^\delta} [A(x)](t) - \mathbb{E}_{\peq} [A(x)] =  \int_0^t k_A(t-s)\delta f(s)\,ds + \mathcal{O}(\delta^2).\label{responsestat}
\EEA
In \eqref{responsestat}, the term $k_A(t)$ is known as the linear response operator. The FDT formulates the linear response operator as,
\BEA
k_A(t) = \mathbb{E}_{\peq}[A(x(t))\otimes B(x(0))],\quad\mbox{where}\quad [B(x)]_i = -\frac{\partial_{x_i} (c_i(x)\peq(x))}{\peq(x)}\label{RA},
\EEA
and $c_i(x)$ denotes the $i$th component of $c(x)$. 

If $A(x)=x$, then $k_A$ corresponds to the mean response operator and if { $A(x)=\big(x- \mathbb{E}(x)\big)^2$}, then $k_{A}$ corresponds to the covariance response operator. The form of this response operator is the same when the underlying dynamical system is a system of ordinary differential equations, assuming that the invariant density is differentiable with respect to $x$ \cite{gww:2016}. In statistical mechanics literature, FDT is known as the linear response approach \cite{Toda-Kubo-2,EvMo08}, which is the foundation for defining transport coefficients, e.g., viscosity, diffusion constant, heat conductivity etc. The important observation is that non-equilibrium properties can be estimated based on two-point equilibrium statistics.

In practice, the response operator $k_A(t)$ can be computed with a direct time average,
\BEA
k_A(t_j) \approx \frac{1}{N} \sum_{i=1}^N A(x_{i+j})\otimes B(x_i),\label{RAMC}
\EEA
given samples of the {\it unperturbed} dynamics, $x_i = x(t_i) \sim\peq(x)$, a parametric form of the invariant density $\peq(x)$, and the {external force} $f(x,t)=c(x)\delta(t)$. Thus, the FDT response offers a data-driven method, and it does not require knowing the model \eqref{SDE} nor samples of perturbed dynamics. Implicitly, this means that all the hypothesis in \cite{hm:10} are satisfied and the equilibrium density of the unperturbed dynamics is known. As pointed out in \cite{mq:16a}, the equilibrium density function for high dimensional problems may not be easily estimated if they are not available, \comment{ as pointed out in \cite{mq:16a}} which is beyond the scope of this paper. Our focus is to use the response statistics, defined based on the samples of the unperturbed dynamics, as a means to identify the model parameters $\theta$, which will be illustrated in the next section.

\section{The parameter estimation problem}

The goal of this paper is to develop a method to infer the parameters $\theta$ in the stochastic model in \eqref{SDE}, given the time series of $x_i =x(t_i;\theta)\sim\peq(x)$. Here we use a short notation to suppress the dependence on the parameter $\theta$. In particular, we assume that we have no access to the underlying dynamics in \eqref{SDE} in the sense that we know the model except the parameters that produce the data set $x_i$. While the focus of this paper is to determine parameters of the same model that generates the underlying truth, the proposed method below will be applicable even if the model to be parameterized is different than the underlying model that generates the data $x_i$. In this case, the method does not need to know the underlying model. To make this point clear, we consider a general setup where the main goal is
to estimate the parameters $\tilde{\theta}$ of the following system of SDE's,
\BEA
d\tilde{x} =\tilde a(\tilde x,\tilde\theta)\,dt + \tilde b(\tilde x,\tilde\theta)\,d\tilde{W}_t,\label{approxSDE}
\EEA
whose equilibrium density is characterized by $\tilde{p}_{eq}$. One of the main goals of this paper is to introduce consistent parameter estimation method. This means that if we employ the method to the underlying model, it will yield the true parameters. Mathematically, this consistency condition is achieved when $\tilde a(\tilde x,\theta) = a(\tilde x,\theta)$ and \red{ $\tilde b(\tilde x,\theta) \tilde b(\tilde x,\theta)^\top = b(\tilde x,\theta)b(\tilde x,\theta)^\top$}. Namely, when the exact parameters are recovered, the exact model will be recovered. As we will show in examples below in Section~4, in fact, some of these parameters are not identifiable by fitting the one-time equilibrium statistics alone, and additional statistics need to be included.   

Following the idea in \cite{mg:11a,mq:16a}, we consider inferring the parameters $\tilde\theta$ in \eqref{approxSDE} such that the linear response statistics of the model in \eqref{approxSDE},
\BEA
\tilde\delta\mathbb{E}[A] =  \mathbb{E}_{\tilde p^\delta} [A(\tilde{x})](t) - \mathbb{E}_{\tilde{p}_{eq}} [A(\tilde{x})], \label{approxresponsestat}
\EEA
under the external forcing, $f(x,t)=c(x)\delta f(t)$, agree with the linear response statistics $\delta \mathbb{E}[A]$ given in \eqref{responsestat} of the underlying system \eqref{SDE} under the same external disturbances. In \eqref{approxresponsestat}, the notation $\tilde p^\delta$ denotes the perturbed density of the approximate model in \eqref{approxSDE} under external forcing. With this setup, let us illustrate our idea for solving this inference problem.

\subsection{The essential statistics}
The key challenge in this inference problem is to find the parameters such that the resulting response operator in \eqref{approxresponsestat} agree with $k_A(t)$ without knowing the underlying dynamics in \eqref{SDE}.
It is clear that computing the full response kernel $k_A(t)$ for each $t\ge 0$ is not practical, since it is an infinite dimensional object. Instead, we define a finite number of quantities, to be referred to as the \emph{essential statistics}, which would allow us to approximate the linear response operator $k_A(t)$, and subsequently estimate the parameters in the model by fitting to these essential statistics.

For this purpose, we define
\BEA
y(t) = \int_0^t k_A(t-\tau)\delta f(\tau)\,d\tau. \label{yint}
\EEA
Taking the Laplace transform on both sides, we have, 
\BEA
Y(s) = K(s)\delta F(s),\label{Y=KS}
\EEA
where $Y, K, \delta F$ denote the Laplace transforms of $y, k_A, \delta f$, respectively. Next consider a rational approximation in terms of $s^{-1}$,
\BEA
K(s) \approx R_{m,m} \overset{\rm def}{=}(I-s^{-1}\beta_1-s^{-m}\beta_m)^{-1} (s^{-1}\alpha_1 + \ldots + s^{-m}\alpha_m), \label{rationalapprox}
\EEA
where $\alpha_i, \beta_i \in \mathbb{R}^{n\times n}$ are to be determined. Compared to the standard polynomial approximations, the rational approximation often offers a larger radius of convergence. More importantly, one can choose parameters so that the approximation has appropriate asymptotes toward infinity. 

To illustrate the idea, suppose $m=1$, and we have, 
\begin{equation}
K(s) \approx R_{1,1} \overset{\rm def}{=} [I-\lambda \beta_1 ]^{-1} \lambda \alpha_1, \quad \lambda=\frac1s.\label{R11}
\end{equation}
To determine the coefficients, we expand $R_{1,1}$ around $\lambda = 0_+,$
 \[ R_{1,1} = \lambda \alpha_1 + \lambda^2 \beta_1\alpha_1 + \mathcal{O}(\lambda^3).\]
Similarly, we expand the response kernel, and with direct calculations, we find that,
\[ K(s) = \int_0^t e^{-t/\lambda} k_A(t) dt = \lambda k_A(0) + \lambda^2 k_A'(0) +  \mathcal{O}(\lambda^3),\] which can be obtained using repeated integration-by-parts. 
By matching the first two coefficients, which is a standard Pad\'e approximation, we arrive at
 \BEA
 \alpha_1=k_A(0),\quad \beta_1= k_A'(0)  k_A(0)^{-1}.\nonumber
\EEA

The corresponding approximation in the time domain can be deduced as follows.  Substituting \eqref{R11} to \eqref{Y=KS}, we get, 
\BEA
sY(s) = \beta_1 Y(s) + \alpha_1\delta F(s),\nonumber
\EEA
Now noticing that $R_{1,1}(0)=k_A(0)$ due to the first matching condition, we can apply the inverse Laplace transform, and obtain  the following initial value problem,
\BEA
y' = \beta_1 y + \alpha_1\delta f, \quad y(0) = 0,\nonumber
\EEA
or equivalently,
\BEA
y(t) = \int_0^t e^{(t-\tau) \beta_1}\alpha_1\delta f(\tau)\,d\tau. \label{approxyint}
\EEA
Comparing \eqref{yint} and \eqref{approxyint}, we see that this procedure corresponds to approximating the response operator by,
\BEA
k_A(t) \approx e^{t \beta_1}\alpha_1\label{approxkA},
\EEA
where $e^{t \beta_1}$ is a matrix exponential.

For arbitrary $m$, the rational function approximation of the Laplace transform corresponds to the following  approximation in the time domain,
\BEA
k_A(t) \approx g_m(t) \overset{\rm def}{=}\begin{pmatrix}I & 0 & \cdots & 0\end{pmatrix} e^{t G
}\begin{pmatrix}\alpha_1 \\ \alpha_2 \\ \vdots \\ \alpha_m \end{pmatrix}, \quad G =\begin{pmatrix}
\beta_1 & I  & 0 &\cdots &0& \\ 
\beta_2 & 0 & I &\cdots &0&\\
\vdots & \vdots & \vdots & \ddots & I &\\
\beta_m & 0 & 0& \cdots  & 0
\end{pmatrix}.\label{eq: ansatz}
\EEA
To determine the coefficients, we extend the first order approximation and enforce additional conditions. More specifically, we choose $t_0 < t_1 < \cdots < t_j,$ and enforce the conditions,
\BEA\label{multi-pade}
  k_A^{(i)} (t_0) = r^{(i)} (t_0),  \quad  k_A^{(i)} (t_1) = r^{(i)} (t_1), \quad \cdots,  \quad k_A^{(i)} (t_{J}) = r^{(i)} (t_J), \quad 0\le i \le 2m-1.
\EEA
Here $r(t)$ is the inverse Laplace transform of the rational function $R_{m,m}(\lambda).$ \blue{Also, if $J=0$, then we have the same number of equations and parameters, $2m$, and the approximation becomes a standard Pad\'e approximation at $\lambda=0.$ }
The 
matching conditions involve the derivatives of the response function at zero: $k^{(i)}_A(0), i =0,\ldots, 2m-1$.  If $J>1$, we would have more equations than the unknowns.
Based on these observations, we define:

\begin{definition}
Let $\{ k_A^{(i)}(t_j)\}_{j=0,1,\ldots, J}$ be the {\it essential statistics} in a sense that they are sufficient to approximate the response operator $k_A(t)$ up to order-$m$, for $i =0,\ldots, 2m-1$.  
\end{definition}

\blue{In our discussion below, we will first focus mostly on the essential statistics at $t_0=0$ with $J=0$.} In particular, we define $M_i \overset{\rm def}{=}  k_A^{(i)}(0).$ The short-time essential statistics can be approximated, e.g., by finite difference approximation on the FDT response operator estimated in \eqref{RAMC}. \blue{Meanwhile, as we will demonstrate later, for problems where the intermediate time scale is also of importance, it is necessary to extend the essential statistics to different times $t_j\neq 0$, e.g., by using nonlinear least-square fitting to approximate the FDT response operator, and consider a more general matching procedure, such as the multipoint 
      approximation \eqref{multi-pade}.
}

\subsection{Inference method by fitting the essential statistics}
Given any integrable function $A(x)$, we define the response property of the surrogate model,
\BEA
\hat{k}_A(t,\tilde\theta) = \mathbb{E}_{\tilde{p}_{eq}}[A(x(t)) \otimes B(x(0))],\quad\mbox{where}\quad
B_i(x) = -\frac{\partial_{x_i} (c_i(x)\peq(x))}{\peq(x)}.\label{hatkA}
\EEA
We should stress that this is not the FDT response of \eqref{approxSDE} since $B$ is defined with respect to $\peq$  
as in \eqref{RA}. For the FDT response, $B$ should  be defined with respect to $\tilde{p}_{eq}$.  

We first consider estimating the parameters in \eqref{approxSDE} by solving 
\BEA
\hat{k}^{(i)}_A(0,\tilde\theta) = M_i, \quad i = 0, 1, \ldots, 2m-1.\label{mainfitting}
\EEA
By solving this system of equations, we essentially match the same statistics, estimated by averaging over two different realizations. One from the observed data which are samples of the unperturbed equilibrium solutions, and another one from the realizations of the approximate dynamics \eqref{approxSDE}. We should point out that for \red{$i=0$}, we are essentially fitting the equilibrium covariance statistics. 

Before discussing complex examples, let us establish the consistency of the proposed parameter estimation method in a simple context.

\begin{theorem}
Consider $x\in\mathbb{R}^n$ that solves a system of linear SDEs,
\BEA
dx = Cx\,dt + D\,dW_t, \quad x(0)=x_0,\label{linearSDE}
\EEA
where $W_t$ is a $d$-dimensional Wiener process and matrices $C\in\mathbb{R}^{n\times n}$ and $D\in\mathbb{R}^{n\times d}$ are defined such that $x$ is ergodic. The underlying parameters $C$ and $DD^\top$ in \eqref{linearSDE} can be estimated exactly by solving a system of equations in \eqref{mainfitting} with $m=1$, where $k_A$ and $\hat{k}_A$ are operators defined with $A(x)=x$ and constant external forcing $f(x,t) = \delta$.
\end{theorem}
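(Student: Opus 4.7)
The plan is to compute both the true essential statistics $M_0, M_1$ and the surrogate quantities $\hat k_A(0,\tilde\theta), \hat k_A'(0,\tilde\theta)$ in closed form, and then show that the two matching equations \eqref{mainfitting} (together with the stationary Lyapunov equation) pin down $\tilde C$ and $\tilde D\tilde D^\top$. I write $\tilde\theta=(\tilde C,\tilde D)$ for the surrogate model $d\tilde x=\tilde C\tilde x\,dt+\tilde D\,d\tilde W_t$.

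First I would record the ingredients determined by ergodicity: the equilibrium density $p_{eq}$ is the centered Gaussian $\mathcal{N}(0,\Sigma)$ whose covariance solves the Lyapunov equation $C\Sigma+\Sigma C^\top+DD^\top=0$, and similarly $\tilde p_{eq}=\mathcal{N}(0,\tilde\Sigma)$ with $\tilde C\tilde\Sigma+\tilde\Sigma\tilde C^\top+\tilde D\tilde D^\top=0$. With constant forcing $f(x,t)=\delta$, we have $c(x)=\mathbf{1}$, so the definition \eqref{RA} gives $B(x)=-\nabla\log p_{eq}(x)=\Sigma^{-1}x$. This is the same $B$ used for $\hat k_A$ in \eqref{hatkA}.

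Next I would evaluate $k_A(t)$ using the integral representation of the OU solution at equilibrium, $x(t)=e^{Ct}x(0)+\int_0^te^{C(t-s)}D\,dW_s$, to get $\mathbb{E}_{p_{eq}}[x(t)x(0)^\top]=e^{Ct}\Sigma$, so
\[
k_A(t)=\mathbb{E}_{p_{eq}}\bigl[x(t)\otimes B(x(0))\bigr]=e^{Ct}\Sigma\,\Sigma^{-1}=e^{Ct}.
\]
Hence $M_0=I$ and $M_1=C$. The surrogate expression is computed in the same way using the dynamics of $\tilde x$ with $\tilde x(0)\sim\tilde p_{eq}$, but keeping the original $B$, yielding
\[
\hat k_A(t,\tilde\theta)=e^{\tilde C t}\,\tilde\Sigma\,\Sigma^{-1},
\qquad
\hat k_A(0,\tilde\theta)=\tilde\Sigma\Sigma^{-1},\qquad
\hat k_A'(0,\tilde\theta)=\tilde C\tilde\Sigma\Sigma^{-1}.
\]

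Finally I would solve \eqref{mainfitting}. The condition $\hat k_A(0,\tilde\theta)=M_0=I$ forces $\tilde\Sigma=\Sigma$, and substituting into $\hat k_A'(0,\tilde\theta)=M_1=C$ gives $\tilde C=C$. Plugging $\tilde C=C$ and $\tilde\Sigma=\Sigma$ into the Lyapunov equation for the surrogate and subtracting the Lyapunov equation for the underlying model yields $\tilde D\tilde D^\top=DD^\top$, which is the consistency statement. I do not expect any serious obstacle here: the only subtle point is the observation that $B$ is evaluated against the \emph{underlying} equilibrium density (so that $\Sigma$ appears on the right and cancels the true covariance), together with the clean fact that matching the zeroth essential statistic is precisely what identifies the equilibrium covariance and therefore unlocks the Lyapunov step that identifies $DD^\top$.
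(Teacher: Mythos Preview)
Your proposal is correct and follows essentially the same route as the paper's proof: compute $k_A(t)=e^{Ct}$ and $\hat k_A(t)=e^{\tilde Ct}\tilde\Sigma\Sigma^{-1}$, match $M_0$ and $M_1$ to recover $\tilde\Sigma=\Sigma$ and $\tilde C=C$, and then read off $\tilde D\tilde D^\top=DD^\top$ from the Lyapunov equation. The only difference is that you spell out the derivation of $B(x)=\Sigma^{-1}x$ and the OU correlation formula explicitly, whereas the paper simply states the results.
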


\begin{proof}
Notice that the system in \eqref{linearSDE} has a unique Gaussian invariant measure with a symmetric positive definite covariance matrix $S$ that solves the Lyapunov equation $CS+SC^\top+DD^\top=0$. In this problem, the given model is exactly in the form of \eqref{linearSDE}, that is, $d\tilde x = \tilde C\tilde{x}\,dt + \tilde{D}\,dW_t$ and our goal is to employ the fitting strategy proposed in \eqref{mainfitting} to estimate $\tilde{C}$ and $\tilde{D}$. For a constant forcing $f(x,t) = \delta$ and functional $A(x)=x$, one can verify that,
\BEA
k_A(t) = e^{tC}, \quad \hat{k}_A(t) = e^{t\tilde{C}}\tilde{S}S^{-1}. \nonumber
\EEA
Here $\tilde{S}$ is the equilibrium covariance matrix of the approximate model which solves the Lyapunov equation $\tilde{C}\tilde{S}+\tilde{S}\tilde{C}^\top+\tilde{D}\tilde{D}^\top=0$. Equating $M_0= \hat{k}_A(0)$ gives $\tilde{S}=S$, and setting $M_1=\hat{k}_A'(0)$ gives $C=\tilde{C}$. From the Lyapunov equation, it is clear that $\tilde{D}\tilde{D}^\top =- \tilde{C}\tilde{S}-\tilde{S}\tilde{C} = -CS-SC^\top = DD^\top$. \qed
\end{proof}

From this example, as well as the examples in the next section, one can see that it is important to define $B_i(x)$ according to  \eqref{hatkA}. In particular, since $p_{eq}$ is involved, $\hat{k}_A(0)$ leads to an equation that naturally connects the equilibrium covariance matrices. The key point of the Theorem above is that one can recover the true parameters by just applying the proposed fitting scheme in \eqref{mainfitting} with $m=1$. It also confirms that by fitting only the one-time equilibrium variances, $M_0= \hat{k}_x(0)$, we only obtain $\tilde{S}=S$ which is not sufficient to determine the model parameters. 

While our strategy is to fit the slope of the correlation function at $t=0$, alternatively, one can also fit the correlation times, 
\BEA
M_\infty := \int_0^\infty k_A(t)\,dt = \int_0^\infty \hat{k}_A(t)\,dt,\nonumber 
\EEA
to obtain $\tilde{C}=C$ and use the same argument above to obtain $\tilde{D}\tilde{D}^\top=DD^\top$. This type of statistics has a close connection with the widely used Green-Kubo formula in statistical physics \cite{kubo1966fluctuation,Toda-Kubo-2}. In practice, however, this fitting procedure may encounter the following problems: (1)  A long time series is usually needed; (2)  It requires accurate estimation of the correlation functions $k_x(t)$ for $t\gg 1$ which can be computationally demanding, unless the correlation length is very short. Due to these potential issues, this approach won't be pursued here. 

For general nonlinear problems, the number of essential statistics, $m$, that is needed in the proposed fitting strategies depends on the properties of $\hat{k}^{(i)}_A(0,\tilde{\theta})$ as functions of the model parameters, $\tilde{\theta},$ and the number of model parameters. As we will see in the next section, the functional dependence on $\tilde{\theta}$ can be very nontrivial even in the case when the analytical expressions are available. \blue{We should also mention that numerically the higher order derivatives, $M_i=k^{(i)}_A(0), i>1$ will be difficult to estimate, as we shall see in Section 4.2 below. To mitigate this practical issue, we will consider matching the lower order derivatives at different points $t_j$ in placed of matching the higher order derivatives, $i>1$, in \eqref{mainfitting}. In particular, we will consider fitting the following finite number of essential statistics,
\BEA
\hat{k}^{(i)}_A(t_j,\tilde\theta) = k^{(i)}_A(t_j), \quad i = 0, 1, \quad 0=t_0<t_1< \ldots<t_J.\label{mainfitting2}
\EEA
We shall see that the fitting strategy in \eqref{mainfitting2} is a consistent parameter estimation method for nontrivial complex dynamical systems.

}  

\section{Nonlinear examples}

In this section, we demonstrate the parameter estimation procedure using two examples. The first example is a system of three-dimensional SDEs that characterizes geophysical turbulence. The second one is the Langevin equation, which is a classical example in statistical mechanics for the dynamics of a particle driven by various forces.

\subsection{A simplified turbulence model}

Consider the following system of SDEs, which were introduced in Chapter 2.3 of \cite{majda2016}  as a simplified model for geophysical turbulence,
\BEA
\frac{dx}{dt} = B(x,x) + Lx -\Lambda x + \sigma\Lambda^{1/2} \dot{W},\label{triad}
\EEA
where $x\in \mathbb{R}^3$. The term $B(x,x)$ is bilinear, satisfying the Liouville property, $div_x(B(x,x)) = 0$ and the energy conservation, $x^\top B(x,x) = 0$. The linear operator $L$ is skew symmetric, $x^\top L x = 0$, representing the $\beta$-effect of Earth's curvature. In addition, $\Lambda>0$ is a fixed positive-definite matrix and the operator $-\Lambda x$ models the dissipative mechanism. The white noise stochastic forcing term $\sigma\Lambda^{1/2}\dot{W}$ with scalar $\sigma^2>0$ represents the interaction of the unresolved scales.

As a specific example, we consider the following bilinear form for $B(x,x)$,
\BEA
B(x,x) = (B_1 x_2x_3,B_2 x_1x_3,B_3 x_1x_2)^\top\nonumber
\EEA
such that $B_1+B_2+B_3=0$ so that the Liouville and energy conservation conditions are satisfied. For this model, it is not very difficult to verify that the equilibrium density is Gaussian, \BEA\peq(x) \propto \exp\Big(-\frac{1}{2}\sigma_{eq}^{-2}x^\top x\Big),\label{peq}\EEA
where $\sigma_{eq}^2 = \sigma^2/2$. It is clear that the equilibrium statistics only depends on $\sigma_{eq}, $ and hence the parameters $\{B_1,B_2,B_3,L, \Lambda\}$ can't be estimated from fitting the one-time equilibrium statistics alone. Here we demonstrate how the essential statistics can help to estimate these parameters.
\smallskip

Let $f=\delta$ be a constant forcing, where $|\delta| \ll 1$ and suppose that the observable is $A(x) = x$. In this case, one can verify that the mean FDT response operator is the correlation function of $x$,
\BEA
k_A(t) = \frac{1}{\sigma_{eq}^2} \mathbb{E}_{\peq}[x(t)x(0)^\top]. \label{Rx}
\EEA  
This linear response operator can be estimated using the solutions of the unperturbed dynamics in \eqref{triad} (as shown  in Figure~\ref{fig1} with the gray curves, which we refer as the true response operator). In the numerical experiment, we have set $B_1=.5, B_2=1, B_3=-1.5$,  
\BEA
L = \begin{pmatrix}0 & 1 & 0 \\ -1 & 0 & -1 \\ 0 & 1 & 0 \end{pmatrix}, \quad\Lambda = \begin{pmatrix} 1\; & \frac12\; & \frac14\; \\  \frac12 & 1 & \frac12 \\ \frac14 & \frac12 & 1\end{pmatrix},\nonumber
\EEA
and $\sigma=1/5$. We solve the SDE to generate samples at  the equilibrium state with the weak trapezoidal method \cite{weak-trapezoildal-method} with step size $\delta t=2\times 10^{-4}$. The correlation function in \eqref{Rx} is computed using $10^7$ samples obtained by subsampling every 5 steps of the solutions up to 5000 model unit time. 

With the data at hand, now consider estimating the parameters in \eqref{triad}. In other words, we assume that the underlying physical model is known except for the true parameters $\theta$. Using the `tilde' notation in \eqref{approxSDE}, we have at most $\tilde{\theta} \in \mathbb{R}^{12}$: two of $\tilde{B}_j$ since the third component can be specified by the energy conserving constraint ($\sum_j \tilde{B}_j = 0$); three upper off-diagonal components of the skew symmetric $\tilde{L}$; six upper diagonal components of symmetric $\tilde{\Lambda}$, and finally,  $\tilde{\sigma}$. 
Since the equilibrium density has zero mean, we don't need to fit the mean statistics. Furthermore, since the first two moments of any Gaussian statistics are the sufficient statistics, all of the higher-order statistics of odd order are zero and those with
 even order are automatically satisfied when the equilibrium covariance statistics are accounted (by matching $\hat{k}_x(0,\theta)=M_0$).

In Figure~\ref{fig1}, we show the order-one approximation \eqref{approxkA} of the mean response operator, i.e., $k_x(t)\approx g_1(t)= e^{t\beta_1}\alpha_1$ in \eqref{Rx} (black curve). Here, the parameters $\alpha_1,\beta_1$ are obtained by solving the equations, $M_0=\alpha_1$ and $M_1=\beta_1\alpha_1$.  Clearly, we observe very good agreement between the exact response functions and the order-1 approximation using just $M_0$ and $M_1$. \blue{For this example, we found that $M_0=I$ and $M_1=L-\Lambda$, where the derivation for $M_1$ is deduced below, cf. \eqref{kslope}. The exact value of $M_1$  given by, 
\begin{equation*}
M_1=\left(\begin{array}{ccc}
-1 & 0.5 & -0.25\\
-1.5 & -1.0 & -1.5\\
-0.25 & 0.5 & -1\\
\end{array}\right).
\end{equation*} 
On the other hand, a straightforward finite difference approximation based on the time correlation function computed from the numerical solution of the SDEs yields the following approximation,
\begin{equation*}
M_1\approx \left(\begin{array}{ccc}
-0.9987 & 0.51956 & -0.2416\\
-1.5088 & -0.9999 & -1.4976\\
-0.2623 & 0.5096 & -0.9979\\
\end{array}\right),
\end{equation*}
which is quite satisfactory compared to the exact values. This approach would be needed in cases where the time series of the solution (data) are available, but the full model is not. 
This suggests that by fitting the model to $M_0$ and $M_1$, we would capture the correct response property. 
}

\begin{figure}[htpb]
\centering
\includegraphics[width=.8\textwidth]{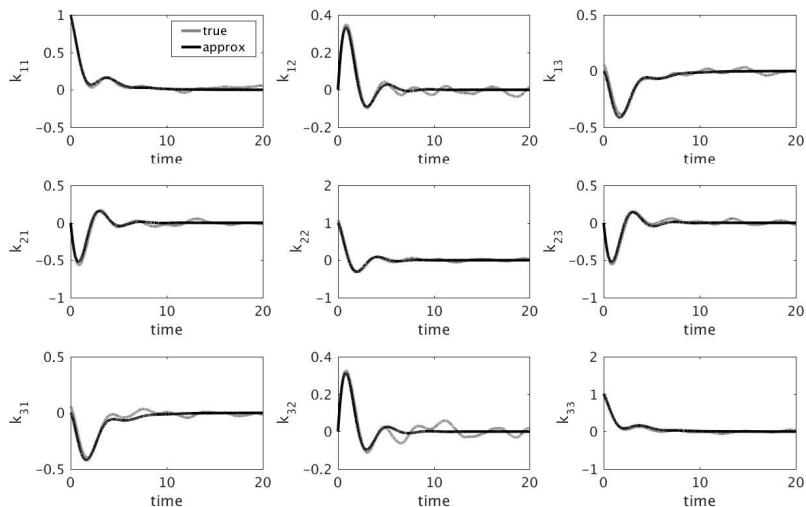}
\caption{Response operator of the three-dimensional system in Example~1: truth (dark) vs the first-order rational approximation (gray).}
\label{fig1}
\end{figure}

\smallskip

Now let's  return to the parameter estimation problem, by applying the fitting strategy in \eqref{mainfitting} to the triad model in \eqref{triad}. 
From \eqref{Rx} and the definition of $\hat{k}_x(t)$ in \eqref{hatkA}, it is clear that,
\BEA
M_0 = k_x(0) = I, \quad\mbox{and}\quad \hat{k}_x(0) = \frac{1}{\sigma_{eq}^2}\mathbb{E}_{\tilde{p}_{eq}}[x(0)x(0)^\top] = \frac{\tilde{\sigma}_{eq}^2}{\sigma_{eq}^2}I.\nonumber
\EEA
Here, we suppressed the parameter dependence by writing $\hat{k}_x(0) =\hat{k}_x(0,\tilde{\theta})$ and understood that $\tilde{\theta}=(\tilde{\sigma}, \tilde{B}_j, \ldots)\in\mathbb{R}^{12}$. Setting $M_0=\hat{k}_x(0)$ gives 
\BEA
\tilde{\sigma}_{eq}^2 = \frac{\tilde{\sigma}^2}{2}=\sigma_{eq}^2, \label{M0result}
\EEA
or equivalently, $\tilde{p}_{eq}=\peq$, since the invariant measure is Gaussian with mean zero and covariance $\sigma_{eq}^2I$ as noted in \eqref{peq}.

To compute $\hat{k}^{(i)}(0) \;\text{for}\; i\geq 1$, we let $\tilde{p}(x,t|y,0)$ be the solution of the corresponding Fokker-Planck equation \cite{risken1984fokker},
\BEA
\frac{\partial}{\partial t} \tilde{p} = \mathcal{\tilde{L}}^* \tilde{p}, \quad \tilde{p}(x,0) = \delta(y-x),\label{fp}
\EEA
where the operator $\mathcal{\tilde L}^*$ is the forward operator of the model whose parameters are to be estimated. Since this example concerns with estimating the parameters in the true model, the operator $\mathcal{\tilde{L}^*}$ is nothing but the forward operator of the true model in \eqref{triad}. In particular, the generator,  
\BEA \label{genapproxSDE1}
\tilde{\mathcal{L}} = (\tilde{B}(x,x)+\tilde{L}x-\tilde{\Lambda} x) \cdot \nabla + \frac{1}{2} \tilde{\sigma}^2\tilde{\Lambda}: \nabla^2
\EEA 
is the adjoint of $\mathcal{\tilde L}^*$\comment{ with respect to $L^2(\mathbb{R}^3)$}. Here, the operation $A:B = \sum_{i,j} A_{i,j}B_{i,j}$ and $\nabla^2$ denotes the Hessian operator. Using \eqref{fp} and \eqref{genapproxSDE1}, we can deduce that,
\BEA
\hat{k}'_x(t) &=& \frac{1}{\sigma_{eq}^2} \int_{\mathbb{R}^3} \int_{\mathbb{R}^3} A(x) \otimes B(y) \frac{\partial}{\partial t}\tilde{p}(x,t|y,0)\,dx\,dy
\nonumber \\ 
&=&\frac{1}{\sigma_{eq}^2} \int_{\mathbb{R}^3} \int_{\mathbb{R}^3} A(x) \otimes B(y) \mathcal{\tilde L}^*\tilde{p}(x,t|y,0)\,dx\,dy \nonumber \\
&=&\frac{1}{\sigma_{eq}^2} \int_{\mathbb{R}^3} \int_{\mathbb{R}^3} \mathcal{\tilde L} A(x) \otimes B(y) \tilde{p}(x,t|y,0)\,dx\,dy. \nonumber
\EEA
Since $A(x) = x, B(y) = \frac{1}{\sigma_{eq}^2}y$, and $\mathcal{\tilde{L}}$ is a differential operator with respect to $x$, by setting $t=0$, we obtain,
\BEA
\hat{k}'_x(0) 
&=& \frac{1}{\sigma_{eq}^2}\int_{\mathbb{R}^3} \int_{\mathbb{R}^3} (\tilde{B}(x,x)+\tilde{L}x-\tilde{\Lambda} x) y^\top \tilde{p}(x,0|y,0)\,dx\,dy. \nonumber \\
&=& \frac{1}{\sigma_{eq}^2}\int_{\mathbb{R}^3} (\tilde{B}(x,x)+\tilde{L}x-\tilde{\Lambda} x) x^\top \tilde{p}_{eq}(x)\,dx. \nonumber \\ &=&  
\frac{\tilde{\sigma}^2_{eq}}{\sigma^2_{eq}}(\tilde{L} - \tilde{\Lambda}).\label{kslope}
\EEA 
In deriving this, we have used the fact that the moments of order-3 of a Gaussian distribution with mean zero are zero. Setting $M_1=\hat{k}'_x(0)$, we obtain a system of nine equations,
\BEA
M_1 = \frac{\tilde\sigma_{eq}^2}{\sigma_{eq}^2}(\tilde{L} -\tilde{\Lambda})\label{M1result}.
\EEA 
Notice that with the equations \eqref{M0result} and \eqref{M1result} together,  we are still short by two equations if we want to estimate all the twelve parameters in the triad model. Using the same technique, one can find a set of equations $M_2=\hat{k}''_x(0)$, where the right hand terms are functions of $\tilde{B}_i$, the coefficients of the quadratic terms. In particular, continuing from the previous step of the calculation, we have,
\BEA
\hat{k}''_x(0) &=& \frac{1}{\sigma_{eq}^2}\int_{\mathbb{R}^3}\int_{\mathbb{R}^3}\mathcal{\tilde{L}}(\tilde{B}(x,x)+\tilde{L}x-\tilde{\Lambda} x) y^\top \tilde{p}(x,0|y,0)\td x\td y. \nonumber
\EEA
Direct calculations (see Appendix A) yields, 
\BEA\label{M2result}
M_2=\hat{k}''_x(0)=\frac{\tilde{\sigma}^{2}_{eq}}{\sigma_{eq}^{2}}(\tilde{L}-\tilde{\Lambda})^2-\frac{\tilde{\sigma}^{4}_{eq}}{\sigma^2_{eq}}\left(\begin{array}{ccc}
\tilde B^2_{1} & 0 & 0\\
0 & \tilde B^2_{2} & 0\\
0 & 0 & \tilde B^2_{3}\\
\end{array}\right).
\EEA
The coefficients $\tilde{B}_i$s are involved in $M_2$. However the signs of $\tilde B_{i}$ cannot be uniquely determined from \eqref{M2result}, since if $(\tilde B_1,\tilde B_2,\tilde B_3)$ satisfies the equation then $-(\tilde B_1,\tilde B_2,\tilde B_3)$ will also satisfy the equation. 
To identify uniquely the signs of the parameters $\tilde B_i$'s, a natural idea will be to check the third or higher order derivatives of $\hat{k}_x(t)$ at point $t=0$.  

Let's verify whether this is a viable approach. For this purpose, we define some multi-index notation. Consider ${x}=(x_1,x_2,x_3)$ and 
$\alpha=(\alpha_1,\alpha_2,\alpha_3)$. We define ${x}^{{\alpha}}=x_1^{\alpha_1}x_2^{\alpha_2}x_3^{\alpha_3}$ and $|\alpha|=\alpha_1+\alpha_2+\alpha_3$. We will need the following lemma.

\begin{lemma}
Let $\mathcal{\tilde{L}}$ be the generator in \eqref{genapproxSDE1}, then repeated applications of the operator $\mathcal{\tilde{L}}$ to $x$ yield polynomials of $x$,
\begin{equation}
\mathcal{\tilde L}^{n}x=\sum_{|{\alpha}|\leq n+1} P_{{\alpha}}{x}^{{\alpha}},\quad\quad  \forall n\geq 1.\label{Lnx}
\end{equation}
Moreover, if $|{\alpha}|$ is an odd number, then $P_{{\alpha}}=P_{{\alpha}}(\tilde{B}_1^2,\tilde{B}_2^2,\tilde{B}_1\tilde{B}_2,\tilde{\sigma},\tilde{L}-\tilde{\Lambda})$, that is, the coefficients of $P_{{\alpha}}$ only depend on the set of parameters $\{\tilde{B}_1^2,\tilde{B}_2^2,\tilde{B}_1\tilde{B}_2,\tilde{\sigma},\tilde{L}-\tilde{\Lambda}\}$.
\end{lemma}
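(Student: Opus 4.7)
I would prove the lemma by induction on $n$, tracking how each application of $\tilde{\mathcal{L}}$ affects (i) the total polynomial degree and (ii) the set of parameters appearing in each coefficient. The base case $n=1$ is a direct computation: since $\nabla^2 x = 0$, we get $\tilde{\mathcal{L}} x = \tilde B(x,x) + (\tilde L - \tilde \Lambda) x$, so $|\alpha|\le 2$, and the only odd-degree contribution $(|\alpha|=1)$ has coefficient $\tilde L - \tilde \Lambda$, as required.

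For the inductive step, the key observation is that $\tilde{\mathcal{L}}$ acts on any monomial $x^\alpha$ via three types of operations, each having a characteristic effect on the degree and the parameter count:
\begin{itemize}
\item the quadratic drift $\tilde B(x,x)\cdot\nabla$ increases the degree by $1$ and inserts one $\tilde B_i$ factor;
\item the linear drift $(\tilde L-\tilde\Lambda)x\cdot\nabla$ preserves the degree and inserts one $(\tilde L-\tilde\Lambda)$-entry;
\item the diffusion $\tfrac12\tilde\sigma^2\tilde\Lambda:\nabla^2$ decreases the degree by $2$ and inserts one factor of $\tilde\sigma^2\tilde\Lambda$.
\end{itemize}
Consequently, each monomial appearing in $\tilde{\mathcal{L}}^n x$ is obtained from a word of length $n$ in these three symbols; if such a word uses the quadratic drift $k$ times and the diffusion $l$ times, then the resulting monomial has degree $1+k-2l$, which shows in particular that $|\alpha|\le n+1$, proving the first assertion.

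The parity claim then follows immediately: every monomial of degree $|\alpha|$ arising this way satisfies $k \equiv |\alpha|-1 \pmod 2$, so $|\alpha|$ odd forces $k$ to be even. When $k$ is even, the $\tilde B$-factors in that term group into pairs $\tilde B_i\tilde B_j$ with $i,j\in\{1,2,3\}$; using $\tilde B_3 = -\tilde B_1-\tilde B_2$, each such pair is a $\mathbb{Z}$-linear combination of $\tilde B_1^2,\tilde B_2^2,\tilde B_1\tilde B_2$. Since the remaining factors in the coefficient depend only on $\tilde L-\tilde \Lambda$, $\tilde\Lambda$ and $\tilde\sigma^2$, we conclude $P_\alpha=P_\alpha(\tilde B_1^2,\tilde B_2^2,\tilde B_1\tilde B_2,\tilde\sigma,\tilde L-\tilde\Lambda)$.

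The main obstacle is making the bookkeeping fully rigorous: one must verify that combining terms across different words (different orderings of the three basic operations) does not destroy the stated parameter dependence. This is straightforward once one notes that the operations act linearly on the space of monomials and that the parity invariant ($k \bmod 2$ versus $|\alpha| \bmod 2$) is preserved under summation, but the indexing is a little delicate and is probably best phrased by defining, for each monomial, a bigraded filtration recording the number of $\tilde B$-insertions and the number of diffusion insertions, and showing inductively that each summand respects this filtration.
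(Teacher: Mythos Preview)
Your approach is essentially the same as the paper's: induction on $n$, decomposing $\tilde{\mathcal{L}}$ into its three constituent pieces and tracking how each affects the degree of a monomial (by $+1$, $0$, $-2$ respectively) and hence the parity of the number of $\tilde B$-insertions. The paper phrases the induction hypothesis slightly differently---it records, for each parity of $|\alpha|$, the parity of the total $\tilde B$-degree in $P_\alpha$---but the content is identical to your ``word'' bookkeeping.

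There is one genuine gap in your write-up. You correctly observe that the diffusion piece inserts a factor of $\tilde\sigma^2\tilde\Lambda$, and you then list the non-$\tilde B$ dependencies of $P_\alpha$ as ``$\tilde L-\tilde\Lambda$, $\tilde\Lambda$ and $\tilde\sigma^2$''---but in the very next clause you conclude that $P_\alpha$ depends only on $\tilde L-\tilde\Lambda$ and $\tilde\sigma$, silently dropping $\tilde\Lambda$. This does not follow from anything you have said. The missing observation, which the paper makes explicit, is that $\tilde L$ is skew-symmetric and $\tilde\Lambda$ is symmetric, so
\[
\tilde\Lambda = -\tfrac12\big[(\tilde L-\tilde\Lambda)+(\tilde L-\tilde\Lambda)^\top\big],
\]
which expresses $\tilde\Lambda$ as a function of $\tilde L-\tilde\Lambda$. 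Without this step your argument only yields $P_\alpha = P_\alpha(\tilde B_1^2,\tilde B_2^2,\tilde B_1\tilde B_2,\tilde\sigma,\tilde L-\tilde\Lambda,\tilde\Lambda)$, which is strictly weaker than the lemma's claim. Once you insert this one line, the proof is complete and matches the paper's.
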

\begin{proof}
To prove the lemma, we show that for all $n$, the coefficients in \eqref{Lnx} satisfy:

\begin{equation}
\begin{split}\label{coef}
P_{{\alpha}}=\begin{cases} P_{{\alpha}}(\{\tilde{B}^{{\beta}}\}_{|{\beta}|\leq |{\alpha}|-1,|{\beta}| \mbox{ is odd}},\tilde{\sigma},\tilde{L}-\tilde{\Lambda}),\quad \forall\;  |{\alpha}| \; \text{even}, \\
P_{{\alpha}}(\{\tilde{B}^{{\beta}}\}_{|{\beta}|\leq |{\alpha}|-1,|{\beta}| \mbox{ is even}},\tilde{\sigma},\tilde{L}-\tilde{\Lambda}),\quad \forall \; |{\alpha}|\;\text{odd},
\end{cases}
\end{split}
\end{equation}
where ${\tilde B}^{\beta} = \tilde{B}_{1}^{\beta_1}\tilde{B}_{2}^{\beta_2}\tilde{B}_{3}^{\beta_3}$ is also a multi-index notation. Equation~\eqref{coef} basically says that if $|\alpha|$ is even (odd), then $P_\alpha$ is the coefficient of $x^\alpha$ that depends on $\tilde{\sigma}$, $\tilde{L}-\tilde{\Lambda}$, $\{\tilde{B}^{{\beta}}\}_{|{\beta}|\leq |{\alpha}|-1}$, where $|{\beta}|$ is odd (even). That is, when $|{\alpha}|$ is odd (as in the statement of the lemma),
\begin{equation}
P_{{\alpha}}=P_{{\alpha}}(\{\tilde B_i\tilde B_j\},\tilde{\sigma},\tilde L-\tilde{\Lambda}). \nonumber
\end{equation}
Furthermore, since  $\tilde B_3=-(\tilde B_1+\tilde B_2)$ we end up with $P_{{\alpha}}=P_{{\alpha}}(\tilde B_1^2,\tilde B_2^2,\tilde B_1\tilde B_2,\tilde{\sigma},\tilde L-\tilde{\Lambda})$. 

Now we prove that the coefficients of \eqref{Lnx} satisfy equation~\eqref{coef} for all $n$. By induction,
\begin{enumerate}
\item For $n=1$, we have $\tilde{\mathcal{L}}x=\tilde B(x,x)+(\tilde L-\tilde {\Lambda})x$, and the coefficients satisfy \eqref{coef} since $(\tilde{L}-\tilde{\Lambda})x$ is the odd power term in $x$ and $\tilde{B}(x,x)$ is the even power term in $x$.
\item Assume for $n=k$, we set
\begin{equation*}
\tilde{\mathcal{L}}^{k}x=\sum_{|{\alpha}|\leq k+1} P^{k}_{{\alpha}}{x}^{{\alpha}}, \quad \tilde{\mathcal{L}}^{k+1}x=\sum_{|{\alpha}|\leq k+2} P^{k+1}_{{\alpha}}{x}^{{\alpha}}, \nonumber
\end{equation*}
where $P^k_\alpha$ satisfies \eqref{coef}. 
From $(\ref{genapproxSDE1})$ we have
\begin{equation*}
\begin{split}
\tilde{\mathcal{L}}P^{k}_{{\alpha}}x^{{\alpha}}
& = P^{k}_{{\alpha}}(\tilde{B}(x,x)+\tilde{L}x-\tilde{\Lambda}x)\cdot\nabla x^{{\alpha}}+\frac{1}{2}P^{k}_{{\alpha}}\tilde{\sigma}^{2}\tilde{\Lambda}:\nabla^2 x^{{\alpha}}\\
& = P^{k}_{{\alpha}}\tilde{B}(x,x)\cdot\nabla x^{{\alpha}}+P^{k}_{{\alpha}}(\tilde{L}-\tilde{\Lambda})x\cdot\nabla x^{{\alpha}}+\frac{1}{2}P^{k}_{{\alpha}}\tilde{\sigma}^{2}\tilde{\Lambda}:\nabla^2 x^{{\alpha}} \\
\end{split}
\end{equation*}
where the gradient vector $\nabla x^{{\alpha}}$ is a row vector. Notice $P^{k}_{{\alpha}}\tilde{B}(x,x)\cdot\nabla x^{{\alpha}}$ is of power $|{\alpha}|+1$, $P^{k}_{{\alpha}}(\tilde{L}-\tilde{\Lambda})x\cdot\nabla x^{{\alpha}}$ is of power $|{\alpha}|$ and $\frac{1}{2}P^{k}_{{\alpha}}\tilde{\sigma}^{2}\tilde{\Lambda}:\nabla^2 x^{{\alpha}}$ is of power $|{\alpha}|-2$. Thus, $\mathcal{\tilde L}^{k+1}x=\mathcal{\tilde L}\mathcal{\tilde L}^kx$ will at most have polynomial of degree $k+2$. In conclusion, we have the following table which shows the even or odd power terms of $x$ in $\tilde{\mathcal{L}}P^{k}_{{\alpha}}x^{{\alpha}}$, depending ib whether $|\alpha|$ is even or odd. Applying the assumption that $P^k_\alpha$ satisfies \eqref{coef} and noticing that $\tilde{\Lambda}=-\frac{1}{2}[(\tilde L-\tilde{\Lambda})+(\tilde{L}-\tilde{\Lambda})^{\top}]$, we see that the formula (\ref{coef}) is satisfied for $\tilde{\mathcal{L}}^{k+1}x$. Thus the lemma is proved.

\begin{table}
\caption{The odd and even power terms in $\tilde{\mathcal{L}}P^{k}_{{\alpha}}x^{{\alpha}}$ for various choice of $|\alpha |$.}
\begin{center}
{\renewcommand{\arraystretch}{2}
\begin{tabular}{c|c|c}\hline
 & Odd power term in $\tilde{\mathcal{L}}P^{k}_{{\alpha}}x^{{\alpha}}$  & Even power term in $\tilde{\mathcal{L}}P^{k}_{{\alpha}}x^{{\alpha}}$  \\
 \hline
 $|{\alpha}|$ even & $P^{k}_{{\alpha}}(\tilde{L}-\tilde{\Lambda})x\cdot\nabla x^{{\alpha}}$ & $P^{k}_{{\alpha}}\tilde{B}(x,x)\cdot\nabla x^{{\alpha}}$, $\frac{1}{2}P^{k}_{{\alpha}}\tilde{\sigma}^{2}\tilde{\Lambda}:\nabla^2 x^{{\alpha}}$\\
 \hline
 $|{\alpha}|$ odd & $P^{k}_{{\alpha}}\tilde{B}(x,x)\cdot\nabla x^{{\alpha}}$, $\frac{1}{2}P^{k}_{{\alpha}}\tilde{\sigma}^{2}\tilde{\Lambda}:\nabla^2 x^{{\alpha}}$ &$P^{k}_{{\alpha}}(\tilde{L}-\tilde{\Lambda})x\cdot\nabla x^{{\alpha}}$ \\ \hline
\end{tabular}}
\end{center}
\label{table1}
\end{table}

\end{enumerate}\qed
\end{proof}

Using this lemma, we obtain

\begin{proposition}
Parameters $\tilde{B}_i$ cannot be identified uniquely from matching $M_i=\hat{k}_x^{(i)}(t)$, where $i\geq 0$
 at $t=0$.\end{proposition}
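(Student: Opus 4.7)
\medskip

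\noindent\textbf{Proof plan.} The strategy is to exhibit a nontrivial sign symmetry of the matching functionals $\hat{k}_x^{(i)}(0)$ under which the vector $(\tilde{B}_1,\tilde{B}_2,\tilde{B}_3)$ gets negated, and deduce from this that the system $M_i=\hat{k}_x^{(i)}(0)$ cannot distinguish $(\tilde{B}_1,\tilde{B}_2,\tilde{B}_3)$ from $(-\tilde{B}_1,-\tilde{B}_2,-\tilde{B}_3)$. Note that the energy-conserving constraint $\tilde{B}_1+\tilde{B}_2+\tilde{B}_3=0$ is preserved under this sign flip, so both choices are admissible parameter sets.

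First, I would generalize the computation that yielded \eqref{kslope} and \eqref{M2result} to arbitrary order. By repeatedly using the Fokker--Planck equation \eqref{fp} and the adjoint relation between $\tilde{\mathcal{L}}^*$ and $\tilde{\mathcal{L}}$, one obtains the compact expression
\begin{equation*}
\hat{k}_x^{(n)}(0)=\frac{1}{\sigma_{eq}^2}\int_{\mathbb{R}^3}\bigl(\tilde{\mathcal{L}}^n x\bigr)\,x^\top\tilde{p}_{eq}(x)\,dx,\quad n\geq 1,
\end{equation*}
together with $\hat{k}_x(0)=\frac{\tilde{\sigma}_{eq}^2}{\sigma_{eq}^2}I$. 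By the lemma, $\tilde{\mathcal{L}}^n x=\sum_{|\alpha|\leq n+1}P_{\alpha}x^{\alpha}$, so after multiplication by $x^\top$ the integrand is a finite sum of monomials of degree $|\alpha|+1$ weighted by coefficients $P_\alpha$.

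Second, I would invoke the Gaussianity of $\tilde{p}_{eq}$, namely $\tilde{p}_{eq}(x)\propto\exp(-x^\top x/(2\tilde{\sigma}_{eq}^2))$, whose odd-order moments all vanish. Consequently, only monomials $x^\alpha x_j$ of even total degree contribute to the integral, which forces $|\alpha|$ to be odd. Then the lemma applies in its sharper form: for $|\alpha|$ odd, $P_\alpha$ depends on the $\tilde{B}_i$'s only through the quadratic combinations $\tilde{B}_1^2,\tilde{B}_2^2,\tilde{B}_1\tilde{B}_2$. Since these quadratic combinations are manifestly invariant under $(\tilde{B}_1,\tilde{B}_2,\tilde{B}_3)\mapsto(-\tilde{B}_1,-\tilde{B}_2,-\tilde{B}_3)$, each surviving $P_\alpha$ is invariant as well, and hence so is the integral $\hat{k}_x^{(n)}(0)$ for every $n\geq 1$. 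The base case $n=0$ is trivially invariant since $\hat{k}_x(0)$ does not involve $\tilde{B}_i$ at all.

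Third, the conclusion is immediate: if $(\tilde{\sigma},\tilde{L},\tilde{\Lambda},\tilde{B}_1,\tilde{B}_2,\tilde{B}_3)$ solves $M_i=\hat{k}_x^{(i)}(0)$ for all $i\geq 0$, then so does $(\tilde{\sigma},\tilde{L},\tilde{\Lambda},-\tilde{B}_1,-\tilde{B}_2,-\tilde{B}_3)$, so the signs of $\tilde{B}_i$ cannot be determined from matching essential statistics at $t=0$. The only nontrivial step is the second one, and the main obstacle is verifying that no odd moments can arise in the integrand; once that is pinned down, the sign-flip symmetry noted after \eqref{M2result} extends cleanly to all orders via the lemma.
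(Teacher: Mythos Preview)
Your proposal is correct and follows essentially the same approach as the paper's proof: both write $\hat{k}_x^{(i)}(0)=\frac{1}{\sigma_{eq}^2}\mathbb{E}_{\tilde{p}_{eq}}[(\tilde{\mathcal{L}}^i x)x^\top]$, use the zero-mean Gaussianity of $\tilde{p}_{eq}$ to discard all even-degree terms of $\tilde{\mathcal{L}}^i x$, and then invoke the lemma to conclude that the surviving coefficients depend on the $\tilde{B}_i$ only through the sign-invariant quadratics $\tilde{B}_1^2,\tilde{B}_2^2,\tilde{B}_1\tilde{B}_2$. Your write-up is a bit more explicit about the sign-flip symmetry, but the argument is the same.
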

\begin{proof}
Since at equilibrium state ${x}\sim \mathcal{N}(0,\tilde \sigma_{eq}^{2}I)$, we have that $\mathbb{E}_{\tilde{p}_{eq}}[{x}^{{\alpha}}]=0$ if $ |{\alpha}|$ is odd. Since $\hat k_{A}^{(i)}(0)=\frac{1}{\sigma_{eq}^2}\mathbb{E}_{\tilde{p}_{eq}}[(\mathcal{L}^{i}x)x^{\top}]$, it is clear that the nontrivial components of $\hat k_A^{(i)}(0)$ are contributed by only  the odd power terms of $\mathcal{\tilde L}^{i}x$. From the lemma above, it is also  clear that $\hat{k}_A^{(i)}(0)$ is always a function of $(\tilde{B}_1^2,\tilde{B}_2^2,\tilde{B}_1\tilde{B}_2,\tilde{L}-\tilde{\Lambda})$. Therefore, we cannot uniquely determine $\{\tilde B_i\}$ from matching the conditions $M_i=\hat{k}_x^{(i)}(0)$, for all $i \geq 0$.\qed
\end{proof}

The proposition above confirms that we cannot recover the parameters $\{\tilde{B}_i\}$ in \eqref{triad} uniquely by fitting only the mean response statistics. However, we claim that these parameters can be determined by fitting the response statistics of different observables, $A$. For example, consider $A(x)=(x_2x_3,x_1x_3,x_1x_2)^{\top}$. Since $A(x)$ has only even power terms, we know that $\hat{k}_{A}(0)=0$. Notice that,
\begin{equation*}
\nabla A(x)=\left(
\begin{array}{c c c} 0 & x_3 & x_2 \\ x_3 & 0 & x_1 \\ x_2 & x_1 & 0 \end{array}
\right).
\end{equation*}
Thus the only odd power term in $\tilde{\mathcal{L}}A(x)$ is $\tilde{B}(x,x)\cdot\nabla A(x)$, which is,
\begin{equation*}
\tilde{B}(x,x)\cdot\nabla A(x)=\left(\begin{array}{c c c} \tilde{B}_2 x_1x_3^2+\tilde{B}_3x_1x_2^2 \\ \tilde B_1 x_2 x_3^2+\tilde{B}_3x_1^2x_2 \\ \tilde{B}_1x_2^2x_3+\tilde{B}_2x_1^2x_3
\end{array}\right)
\end{equation*}
Thus we have
\begin{equation*}
\hat{k}_{A}'(0)=\frac{1}{\sigma_{eq}^2}\int_{\mathbb{R}^3}\tilde{B}(x,x)\cdot\nabla A(x)x^{\top}\tilde p_{eq}\td x=\frac{\tilde{\sigma}_{eq}^4}{\sigma_{eq}^2}\left(\begin{array}{c c c}
B_2+B_3 & 0 & 0\\
0 & B_1+B_3 & 0\\
0 & 0 & B_1+B_2\\
\end{array}\right)
\end{equation*}
which contains linear terms of $B_i$s. This observation suggests that in order to fully recover all the parameters in \eqref{triad}, one may have to choose multiple observables as test functions. 
\smallskip

From this example, we also notice that the term $M_1$ involves terms of order $\tilde \sigma_{eq}^2/\sigma_{eq}^2=1$ (see \eqref{M1result}) and $M_2$ involves terms of order $\sigma_{eq}^4/\sigma_{eq}^2=\sigma_{eq}^2$ (see \eqref{M2result}), where we used the fact that $\tilde \sigma_{eq}^2=\sigma_{eq}^2$ based on \eqref{M0result}. In general, both $M_{2\ell}$ and $M_{2\ell+1}$ contain terms with order $\sigma_{eq}^{2\ell}$. In practical applications, if $\sigma_{eq}\ll 1$ and  $\ell$ is large, the parameters in the order $\sigma_{eq}^{2\ell}$ terms are not identifiable. For example, the parameters $\tilde B_i^2$ in \eqref{M2result} will be harder to detect when $\sigma_{eq}\ll 1$. One way to mitigate this issue is by rescaling the equilibrium covariance of the true data to identity.

\subsection{A Langevin dynamics model}
Here, we consider a classical example in statistical mechanics: the dynamics of a particle driven by a conservative force, a damping force, and a stochastic force. In particular, we choose the force based on the Morse potential,
\begin{equation}
U(x)= U_0(a(x-x_0)), \quad U_0(x)= \epsilon(e^{-2x}-2e^{-x} + 0.01 x^2), \label{potential}
\end{equation}
where the last quadratic term acts as a retaining potential, preventing the particle to go to infinity.  This will ensure that the probability density is well defined. Here, we have introduced three parameters $(x_0,a,\epsilon).$ The parameter $x_0$ indicates a distance that corresponds to the lowest energy, the parameter $a$ controls the length scale, and $\epsilon$ has the unit of energy.

We rescale the mass to unity $m=1$, and write the dynamics as follows,
\begin{equation}\label{Lan_sys}
\begin{cases}
\dot{x}=v  \\
\dot{v}=-U'(x)-\gamma v+\sqrt{2\gamma k_{B}T} \dot{W}, \\
\end{cases}
\end{equation}
where \blue{$\dot{W}$ is a white noise and the amplitude $2\gamma k_{B}T$}, with temperature $k_{B}T$. The amplitude is chosen to satisfy the fluctuation-dissipation theorem \cite{kubo1966fluctuation}. Here we consider a one-dimensional model, $x,v\in \mathbb{R}.$ In general, the dimension depends on  the number of particles, and such models have been widely used in molecular modeling.
The dynamics in \eqref{Lan_sys} naturally introduces two additional parameters $\gamma$ and $k_B T$. Our goal is  to determine the five positive parameters $(\epsilon,\gamma,k_{B}T,a,x_0)$ from the samples of $x$ and $v$ at the equilibrium state. The equation above has the equilibrium distribution $p_{eq}(x,v)$ given by, 
\begin{equation*}
p_{eq}\propto \exp\Big[-\frac{1}{k_{B}T}\big(U(x)+\frac{1}{2}v^{2})\big)\Big].
\end{equation*}
In particular, we have $v\sim \mathcal{N}(0,k_{B}T)$ at equilibrium. 

We first look at what we can learn from the equilibrium statistics. We define 
the corresponding probability density functions in terms of both $U$ and $U_0$,
\begin{eqnarray}
\tilde{p}_{eq}^{\tilde a,\tilde x_0}(x)&:=&\frac{1}{N}\exp(-\frac{1}{k_B \tilde{T}}U_0(\tilde{a}(x-\tilde{x}_0)),\nonumber \\
\tilde{p}_{eq}^{1,0}(x)&:=&\frac{1}{N_{0}}\exp(-\frac{1}{k_B \tilde{T}}U_0(x)),\nonumber
\end{eqnarray}
where we adopt the `tilde' notation to be consistent with the formulation in Section~3. Here, the normalizing constant $N$ and $N_0$ are given by,
\begin{equation*}
N=\int_{\mathbb{R}}\exp(-\frac{1}{k_B \tilde{T}}U_0(\tilde{a}(x-\tilde{x}_0)))\td x=\frac{1}{\tilde{a}}\int_{\mathbb{R}}\exp(-\frac{1}{k_B \tilde{T}}U_0(y))\td y=\frac{1}{\tilde{a}}N_0.
\end{equation*}
As a result, the expectation with respect to $\tilde{p}_{eq}^{a,x_0}$ can often by expressed in terms of the expectation with respect to $\tilde{p}_{eq}^{1,0}$ to reveal the explicit dependence on the parameters $a, x_0$. For instance, the first moment,
\begin{equation*}
\begin{split}
\mathbb{E}_{\tilde{p}_{eq}}^{\tilde{a},\tilde{x}_0}[x]&=\frac{1}{N}\int_{\mathbb{R}}x\exp(-\frac{1}{k_B \tilde{T}}U_0(\tilde{a}(x-\tilde{x}_0))) \td x \\
&=\frac{1}{\tilde{a}N_0}\int_{\mathbb{R}}(y+\tilde{a}x_0)\exp(-\frac{1}{k_B \tilde{T}}U_0(y)) \td y \\
&=\frac{1}{\tilde{a}}\mathbb{E}_{\tilde{p}_{eq}}^{1,0}[x]+\tilde{x}_0.
\end{split}
\end{equation*}

Similarly, we get the second moment,
\begin{equation*}
\begin{split}
\mathbb{E}_{\peq}^{\tilde{a},\tilde{x}_0}[x^2]
&=\frac{1}{N}\int_{\mathbb{R}}x^2 \exp(-\frac{1}{k_B \tilde{T}}U_0(\tilde{a}(x-\tilde{x}_0))) \td x =\frac{1}{N_0}\int_{\mathbb{R}}(\frac{y}{\tilde{a}}+\tilde{x}_0)^2 \exp(-\frac{1}{k_B \tilde{T}}U_0(y)) \td y\\
&=\frac{1}{\tilde{a}^2}\left(\frac{1}{N_0}\int_{\mathbb{R}}y^2\exp(-\frac{1}{k_B \tilde{T}}U_0(y)) \td y \right)+\frac{2\tilde{x}_0}{\tilde{a}}\left(\frac{1}{N_0}\int_{\mathbb{R}}y\exp(-\frac{1}{k_B \tilde{T}}U_0(y)) \td y \right)+\tilde{x}_0^2 \\
&=\frac{1}{\tilde{a}^2}\mathbb{E}_{\tilde{p}_{eq}}^{1,0}[x^2]+\frac{2\tilde{x}_0}{\tilde{a}}\mathbb{E}^{1,0}_{\tilde{p}_{eq}}[x]+\tilde{x}_0^2.
\end{split}
\end{equation*}

Matching these moments to those computed from the data, we obtain
\BEA
\mathbb{E}_{\peq}[x] &=& \mathbb{E}_{\tilde{p}_{eq}}^{\tilde{a},\tilde{x}_0}[x]=\frac{1}{\tilde{a}}\mathbb{E}_{\tilde{p}_{eq}}^{1,0}+\tilde{x}_0, \\
\mathbb{E}_{\peq}[x^2] &=& \mathbb{E}_{\tilde{p}_{eq}}^{\tilde{a},\tilde{x}_0}[x^2] =\frac{1}{\tilde{a}^2}\mathbb{E}_{\tilde{p}_{eq}}^{1,0}[x^2]+\frac{2\tilde{x}_0}{\tilde{a}}\mathbb{E}^{1,0}_{\tilde{p}_{eq}}[x]+\tilde{x}_0^2.
\EEA
Since $\tilde a>0$, we have a unique solution
\begin{equation}\label{axo}
\begin{cases}
\tilde{a}=\sqrt{\frac{\text{Var}_{\peq}[x]}{\text{Var}^{1,0}_{\tilde{p}_{eq}}[x]}},\\
\tilde{x}_0=\mathbb{E}_{\peq}[x]-\mathbb{E}^{1,0}_{\tilde{p}eq}[x]\sqrt{\frac{\text{Var}^{1,0}_{\tilde{p}_{eq}}[x]}{\text{Var}_{\peq}[x]}},
\end{cases}
\end{equation}
where $\text{Var}_{\tilde{p}_{eq}}^{1,0}[x]$ stands for the variance of $x$ with respect to equilibrium density $\tilde{p}^{1,0}_{eq}$, which can be computed by a direct sampling. Practically, this requires solving \eqref{Lan_sys} with $\tilde{a}=1, \tilde{x}_0=0$ and the correct parameters $\gamma,\epsilon$ and $k_B T$ which we will obtain next through fitting to the essential statistics. On the other hand, the $\mathbb{E}_{\peq}[x]$, $\mbox{Var}_{\peq}[x]$ can be empirically estimated directly from the given data (which are solutions of the model using the true parameters). Based on the derivation above, we notice that fitting the one-time equilibrium statistics alone is not enough to determine the model parameters.

To estimate the other three parameters, we consider a constant external forcing $\delta f$ with $\delta\ll 1$. The corresponding perturbed system is given by,
\begin{equation}\label{psys}
\begin{cases}
\dot{x}=v  \\
\dot{v}=-U'(x)-\gamma v+\delta f+\sqrt{2\gamma k_BT} \dot{W}. \\
\end{cases}
\end{equation}

We select the observable $A=(0,v)^\top$, and take $c(x)=(0,1)$ in the FDT formula. 
\comment{To begin with the calculation of the essential statistics, we have
\begin{equation*}
\frac{\partial}{\partial v}\peq=- \frac{v}{k_B T}p_{eq}.
\end{equation*}}
We will work with the (2,2) entry of the response operator,  given by,
\begin{equation}
k_{A}(t) = \frac{1}{k_B T}\int_{\mathbb{R}^2}\int_{\mathbb{R}^2} 
 v v_0p(x,v,t|x_0,v_0,0) \td x\td v\td x_0 \td v_0,\nonumber
\end{equation}
which can only be computed from the solutions of \eqref{Lan_sys} since the true parameters are not known. \blue{We denote $M_i=k^{(i)}_A(0)$, the $i^{th}$ derivative of $k_A(t)$ at point zero. We now discuss $M_0$ and higher order derivatives, $M_1$, $M_2$ and $M_3$.
}

\subsubsection{Determining the parameters from the essential statistics at $t=0$}

To make a consistent notation with our formulation in Section~3, we now use `tilde' to indicate the parameters that we are interested to estimate, namely $\{\tilde{\gamma},\tilde{\epsilon},k_B\tilde{T}\}$ where the model is exactly the Langevin model in \eqref{Lan_sys}. We define $ \tilde{p}(x,v,t|y,u,0)$ to be the solution of the Fokker-Planck equation \cite{risken1984fokker},
\BEA
\frac{\partial}{\partial t} \tilde{p} = \mathcal{\tilde{L}}^* \tilde{p}, \quad \tilde{p}(x,v,0) = \delta(x-y,v-u),\nonumber
\EEA
where the operator $\mathcal{\tilde L}^*$ is the forward operator of \eqref{triad}. The generator,  
\BEA
\tilde{\mathcal{L}} =  v\frac{\partial}{\partial x} + (-U'(x)-\tilde{\gamma}v) \frac{\partial}{\partial v} + \tilde\gamma k_B\tilde{T} \frac{\partial^2}{\partial v^2}\label{genapproxSDE2}
\EEA
is the adjoint of $\mathcal{\tilde L}^*$\comment{ with respect to $L^2(\mathbb{R}^2)$} and we denote the equilibrium density as $\tilde{p}_{eq}$.

Using the definition in \eqref{hatkA}, one can deduce that,
\begin{equation}
\hat{k}_{A}(t) = \frac{1}{k_B T}\int_{\mathbb{R}^2}\int_{\mathbb{R}^2} 
v u \tilde{p}(x,v,t|y,u,0) \td x\td v\td y \td u.\label{kt}
\end{equation}
Setting $t=0$ we have,
\begin{equation*}
\hat{k}_{A}(0) = \frac{1}{k_B T} \int_{\mathbb{R}^2} 
 v^2 \tilde{p}_{eq}(x,v) \td x\td v 
=\frac{k_B\tilde{T}}{k_B T}.
\end{equation*}
Since $M_0=1 = \hat{k}_{A}(0) = \frac{k_B\tilde{T}}{k_B T}$, it is clear that fitting the equilibrium variance reveals the temperature, $\tilde{T}= T$.

We now turn to the first order derivative,
\BEA
\hat{k}_{A}'(t) &=&  \frac{1}{k_B T}\int_{\mathbb{R}^2}\int_{\mathbb{R}^2} \mathcal{\tilde L}v\,u \tilde p(x,v,t|y,u,0) \td x\td v\td y \td u \nonumber\\
&=&  \frac{1}{k_B T}\int_{\mathbb{R}^2}\int_{\mathbb{R}^2}
(-U'(x)-\tilde \gamma v)u \tilde{p}(x,v,t|y,u,0) \td x\td v\td y \td u.\label{kprime}
\EEA
Setting $t=0$ we have
\begin{equation}
\hat{k}_{A}'(0)= \frac{1}{k_B T}\int_{\mathbb{R}^2}
(-U'(x)-\tilde \gamma v)v
 \tilde{p}_{eq}(x,v)\td x\td v=  -\tilde \gamma \frac{k_B\tilde{T}}{k_BT}.\nonumber
\end{equation}
which means that the constraint $M_1= \hat{k}'_A(0)= -\tilde \gamma$, accounting $\tilde{T}=T$ from matching $M_0$. We should point out that for this problem, one can in fact compute $M_1 = -\gamma$ analytically and thus the true parameter is obtained, $\tilde{\gamma}=\gamma$. Of course, in general when the underlying model is unknown, one can't compute $M_1$ analytically and  and has to resort to  numerical approximations such as a finite difference scheme or a rational approximation as we shall see below.

To estimate $\tilde{\epsilon}$, we take higher order derivatives,
\begin{equation*}
\begin{split}
\hat{k}_{A}''(t)&=  \frac{1}{k_B T}\int_{\mathbb{R}^2}\int_{\mathbb{R}^2} \mathcal{\tilde L}
(-U'(x)-\tilde\gamma v) u \tilde{p}(x,v,t|y,u,0) \td x\td v\td y \td u \\
&= \frac{1}{k_B T}\int_{\mathbb{R}^2}\int_{\mathbb{R}^2}
(-vU''(x)+\tilde \gamma U'(x)+\tilde \gamma^{2} v) u
 \tilde{p}(x,v,t|y,u,0) \td x\td v\td y \td u.
\end{split}
\end{equation*}
Setting $t=0$ we have
\begin{equation}
\hat{k}''_{A}(0)=  \frac{1}{k_B T}\int_{\mathbb{R}^2}
(-vU''(x)+\tilde \gamma U'(x)+\tilde \gamma^{2} v) v
 \tilde{p}_{eq} \td x\td v= \tilde\gamma^{2}-\mathbb{E}_{\tilde{p}_{eq}}[U''(x)].\label{k2A}
\end{equation}
Notice that the parameter $\tilde \epsilon$ is hidden in the potential $U$ as defined in \eqref{potential} and the expectation is with respect to the equilibrium density, $\tilde{p}_{eq}$, which also depends on the parameters $\tilde \epsilon$. 

\comment{
For the third order derivative, we find that (see Appendix B),
\begin{equation*}
\begin{split}
\hat{k}^{(3)}_{A}(0)= 2\tilde\gamma\mathbb{E}_{eq}[U''(x)]-\tilde\gamma^3. 
\end{split}
\end{equation*}
Interestingly, it is not independent of $\hat{k}''_{A}(0)$.
}

\blue{To summarize, a natural parameter estimation method would consist of matching the following conditions:  
\BEA
M_0 &=& \hat{k}_A(0) = \frac{k_B\tilde{T}}{k_B T}\nonumber\\
M_1 &=& \hat{k}'_A(0) = -\tilde{\gamma}\frac{k_B\tilde{T}}{k_B T}\label{M0-M2}\\
M_2 &=& \hat{k}''_A(0) = \tilde\gamma^{2}-\mathbb{E}_{\tilde{p}_{eq}}[U''(x)] \nonumber
\EEA
where the first two equations give estimates to $\tilde{T}$ and $\tilde{\gamma}$, whereas $\tilde{\epsilon}, \tilde{a},$ and $\tilde{x}_0$ can be estimated by solving the third equation together with constraints \eqref{axo}. Assuming that $\mathbb{E}_{\tilde{p}_{eq}}[U'']$, $\tilde{a}$ and $\tilde{x}_0$ are smooth functions of $\tilde{\epsilon}$, we can solve 
\BEA
M_2=\hat{k}''_A(0;\tilde{\epsilon},\tilde{a}(\tilde\epsilon),\tilde{x}_0(\tilde\epsilon)),\nonumber
\EEA
for $\tilde{\epsilon}$, where we have implicitly inserted the constraints in \eqref{axo} to the third equation in \eqref{M0-M2}. In general, we expect that $\hat{k}^{(i)}(0)$ do not have explicit expressions and thus the proposed algorithm in this paper should be carried numerically. Therefore an important scientific problem will be to design an efficient numerical method to sample $\tilde{p}_{eq}$ which depends on the parameters that are to be estimated. }

In principle, to solve the system of equations in \eqref{M0-M2} along with the constraints \eqref{axo}, we need to estimate $M_0, M_1, M_2$ from the data and simultaneously check whether these statistics are sufficient to estimate the FDT response operator $k_A(t)$. We discuss the practical aspects in the next section.

\subsubsection{Practical challenges in approximating the response operator and the higher-order essential statistics}

{
The goal of this section is to show that it is indeed difficult to approximate the long term response operator using just the essential statistics $M_i$. In fact, it is already difficult to estimate the higher order derivatives such as $M_2$ and $M_3$. Before we give a remedy, let us illustrate this issue with a few numerical results.

Since the linear response operator $k_A(t)$ in this case is a scalar function, the order-1 approximation corresponds to $k_A(t)\approx g_1(t) = e^{t\beta_1}\alpha_1$. The general order-$m$ method $k_A(t)$ is approximated by $g_m(t)$ in \eqref{eq: ansatz}. We conducted two numerical experiments, one with $\gamma=0.5$ and a second one with $\gamma=0.1$. The correlation function $k_A(t)$ is computed using $8\times10^6$ samples obtained by subsampling every $10$ steps of the solutions up to $20000$ model unit time with truncating the transient time. For $\gamma=0.5$, the linear response kernel shows rapid decay, indicating an over-damped behavior, as shown in Figure \ref{fig: over-damp}, where we also show the order-1, 2, and 3 approximations of the response kernel using the short-time essential statistics. Here, by order-$m$ approximation, we mean that the coefficients in $g_m$ are determined by $M_i, i=0,\ldots, 2m-1$, where $M_i$s are computed explicitly (see Appendix B for the higher order terms). Here, we choose to use the true $M_i$ to rule out the potential effects of numerical error. It is clear that all three approximations show good agreement only at short times. Beyond $t>1$, the estimates are not accurate at all. The inaccuracies in the response function approximation are even more apparent in the case of weaker damping $\gamma=0.1$, in which case, the response kernel exhibits an oscillatory pattern, indicating that there is a nontrivial intermediate scale dynamics. The main observation here is that when only the short-time essential statistics $M_i = k^{(i)}_A(0)$ is used to determine the parameters in $g_m$, the prediction of response function has very poor accuracy. In particular, while the higher order approximations offer a slightly better approximation of the response function near zero, the overall accuracy is very poor.

}
\begin{figure}[htbp]
\begin{center}
\includegraphics[scale=0.4]{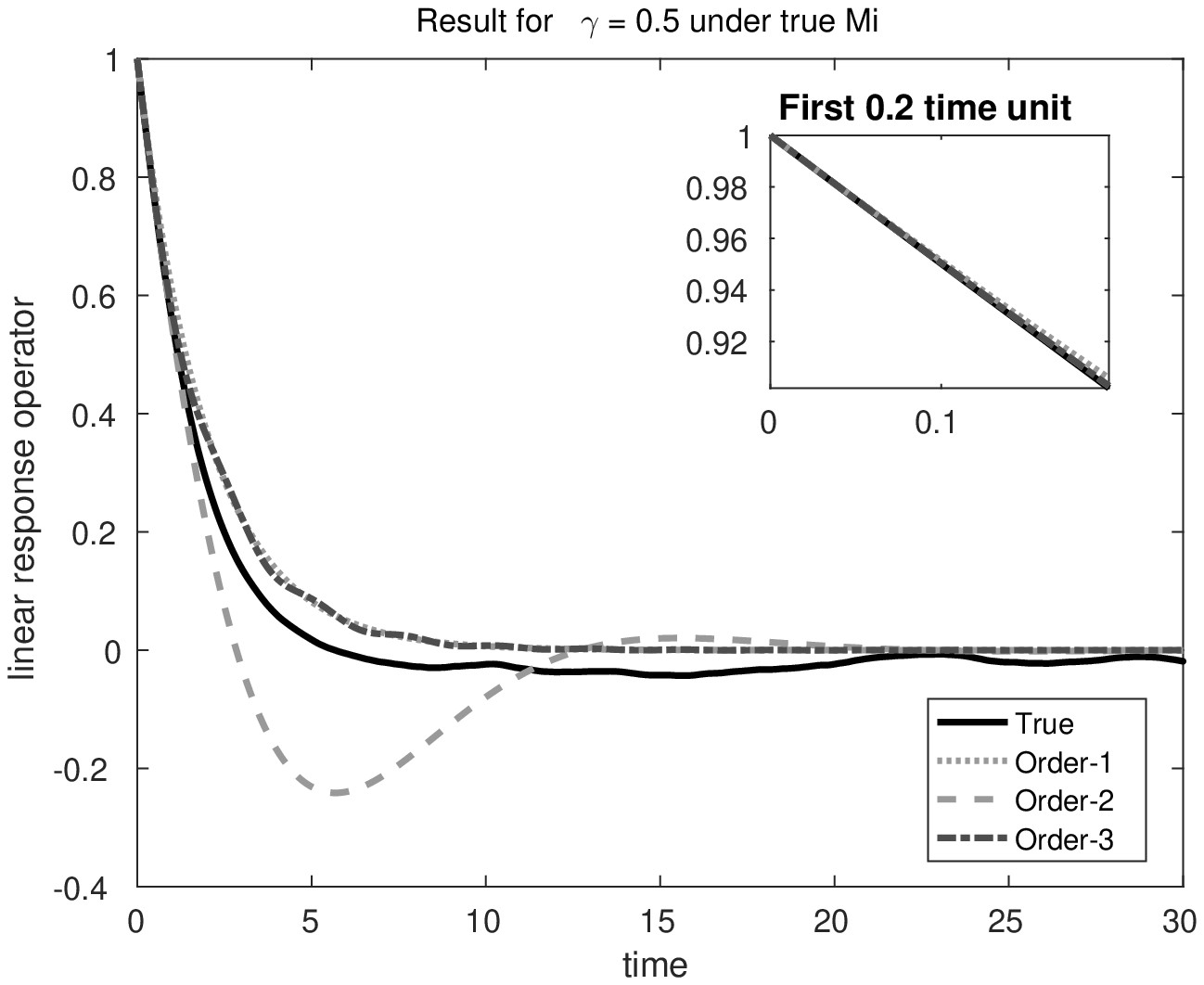}
\includegraphics[scale=0.4]{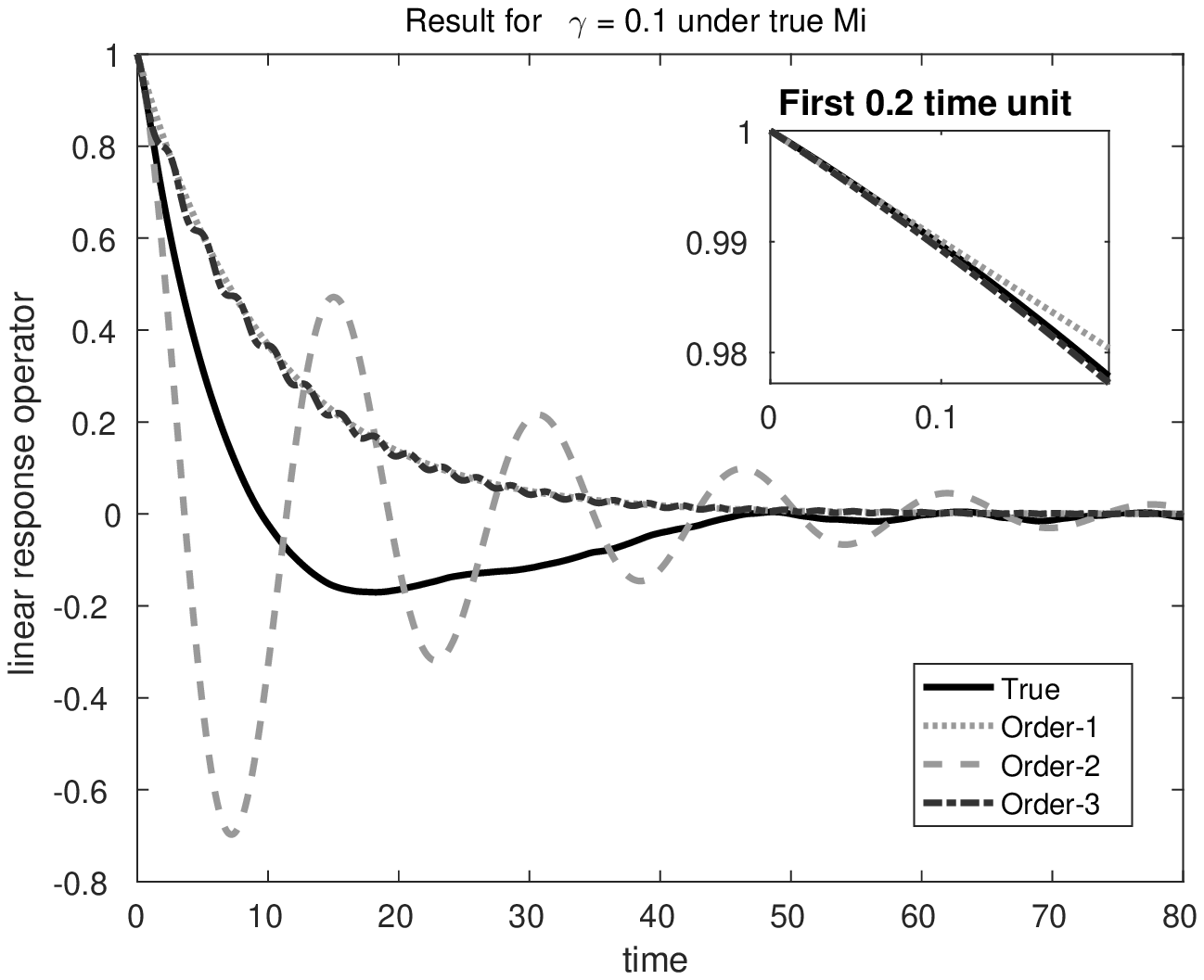}
\caption{The response functions for $\gamma=0.5$ (left) and $\gamma=0.1$ (right). The exact response function (solid line) is obtained from a direct numerical simulation, and the approximate response functions are obtained from the order-$m$ rational approximations  \eqref{eq: ansatz}.}
\label{fig: over-damp}
\end{center}
\end{figure}

\blue{This result clearly suggests that the essential statistics at $t=0$ are not sufficient in approximating the response function. A simple remedy is to include the essential statistics from intermediate time scales, that is, ${k}_A^{(i)}(t_j)$, where $t_j>0$. This means that we need to estimate the derivatives at these locations. We propose to approximate these derivatives with $g^{(i)}_m(t_j)$, where the parameters in $g_m(t)$ in \eqref{eq: ansatz} are obtained 
by a nonlinear least squares fitting. More specifically, we estimate the parameters in $g_m(t)$ by solving,
\BEA
\min_{\substack{\alpha_1,\ldots,\alpha_m \\ \beta_1,\ldots,\beta_m}} \sum_{i=1}^{n} \Big(k_{A}(t_{i})-g_{m}(t_{i};\alpha_1,\ldots,\alpha_m,\beta_1,\ldots,\beta_m)\Big)^{2}, \label{nls}
\EEA
where is a nonlinear least-square problem. Then we use $g^{(i)}_m(t)$ to approximate $k_A^{(i)}(t)$. With this fitting, we avoid solving a system of algebraic equations that maps the coefficients \eqref{eq: ansatz} to the set of essential statistics $M_0$ and $M_1$ at the selected time steps. As an example, we pick $n=84$ points in $[0,60]$, denser near zero. We observe from the results in  Figure~\ref{fig: nonlinear-fit} that for $\gamma=0.5$, both the order-1 and 2 approximations are relatively accurate when $t<5$; here, the second order approximation is more accurate. For $\gamma=0.1$, the order-4 approximation provides an excellent fit to the response function. In Figure~\ref{fig: derv}, we show the remarkable agreement between the corresponding $g'_m(t)$ and the $k'_A(t)$ which can be computed explicitly with the formula in \eqref{kprime}. These numerical results suggest that the ansatz $g_m(t)$ in \eqref{eq: ansatz} is appropriate for estimating $k_A(t)$, but the coefficients have to be determined based on $k_A(t_j)$ at intermediate time scales.}
\smallskip
\begin{figure}[htbp]
\begin{center}
\includegraphics[scale=0.4]{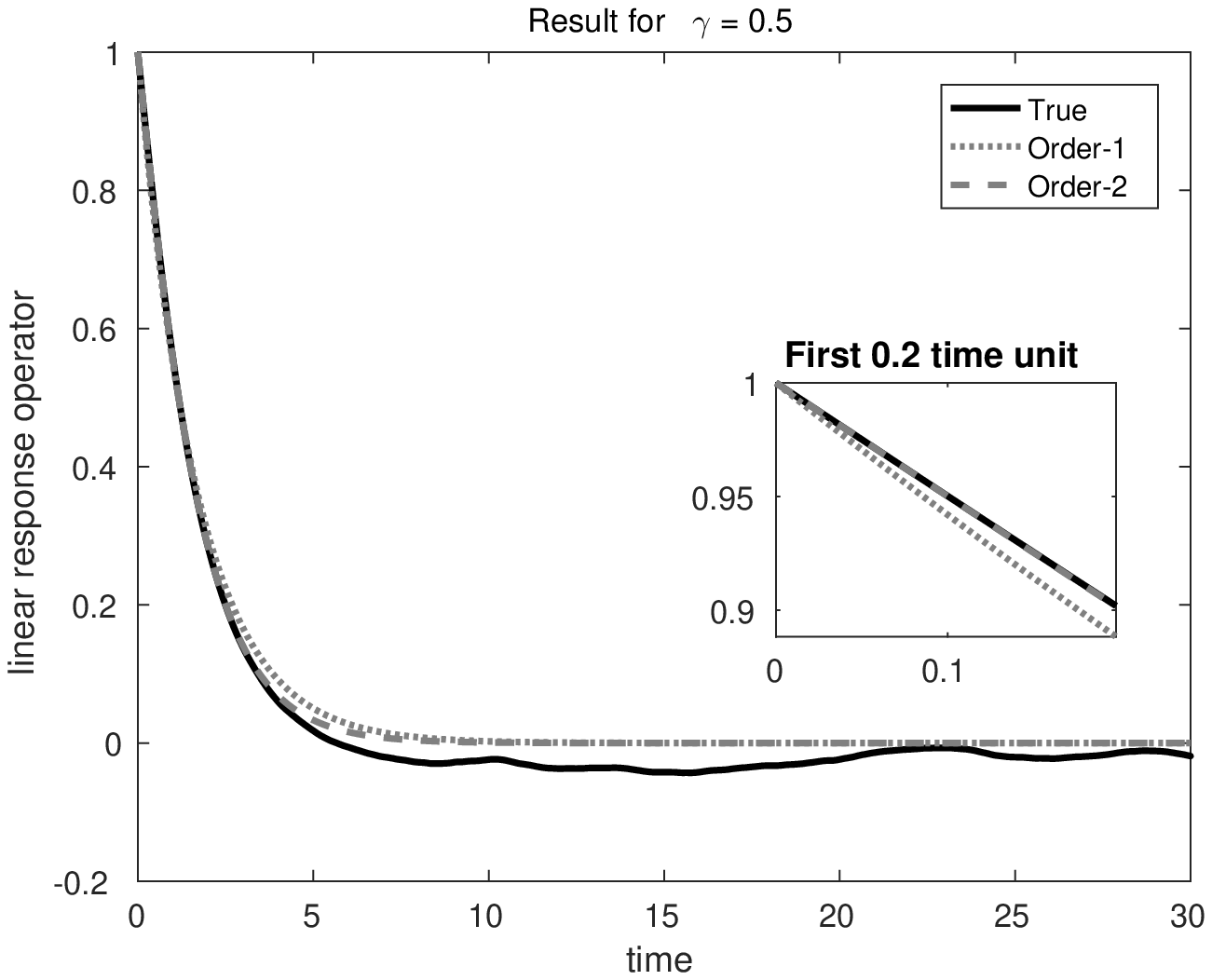}
\includegraphics[scale=0.4]{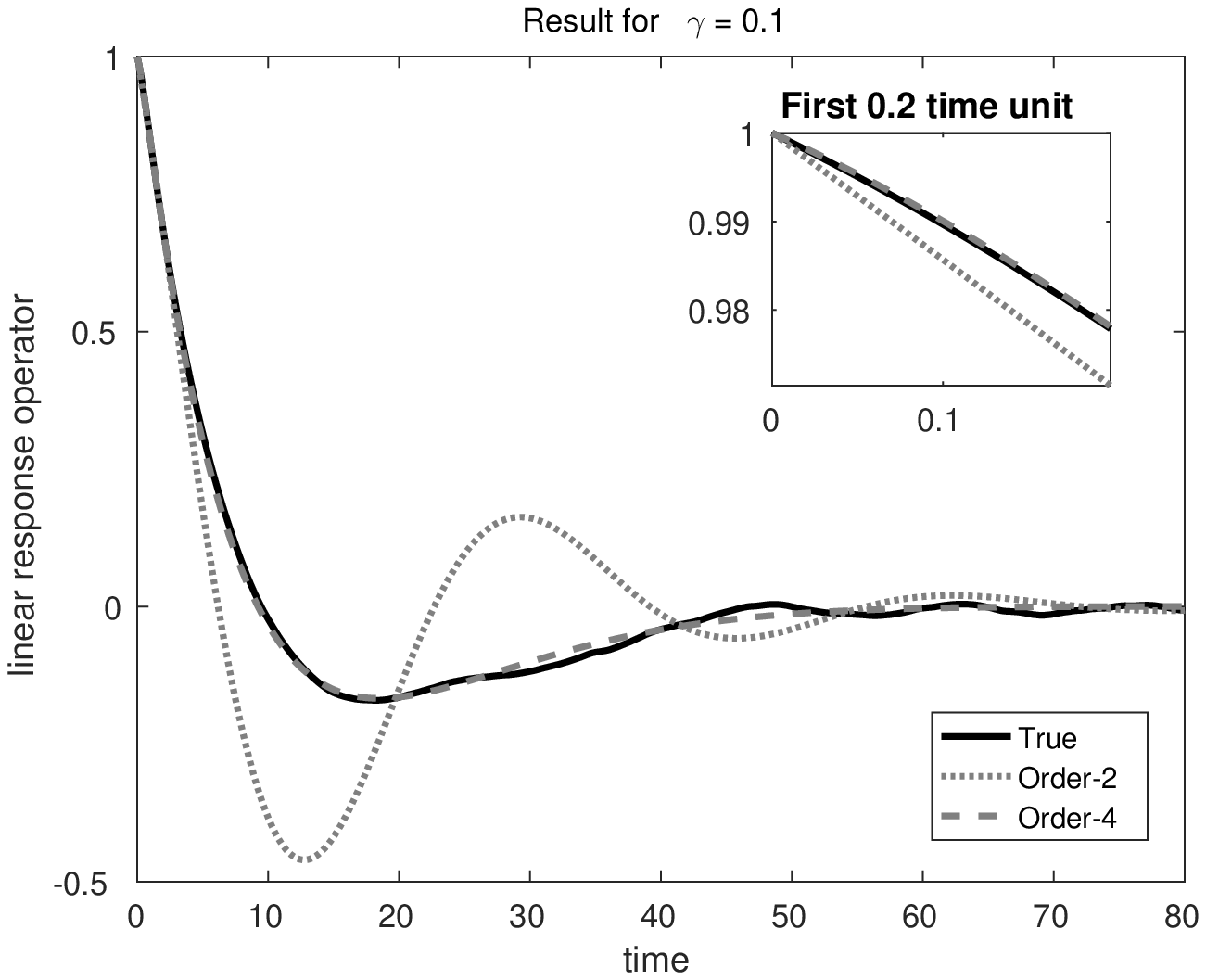}
\caption{The response functions for $\gamma=0.5$ (left) and $\gamma=0.1$ (right). The exact response function (solid line) is obtained from a direct numerical simulation, and the approximate response functions are obtained from the order-$m$ rational approximations with coefficients determined via nonlinear least-square fitting \eqref{nls}
.}
\label{fig: nonlinear-fit}
\end{center}
\end{figure}

\begin{figure}[thbp]
\begin{center}
\includegraphics[scale=0.4]{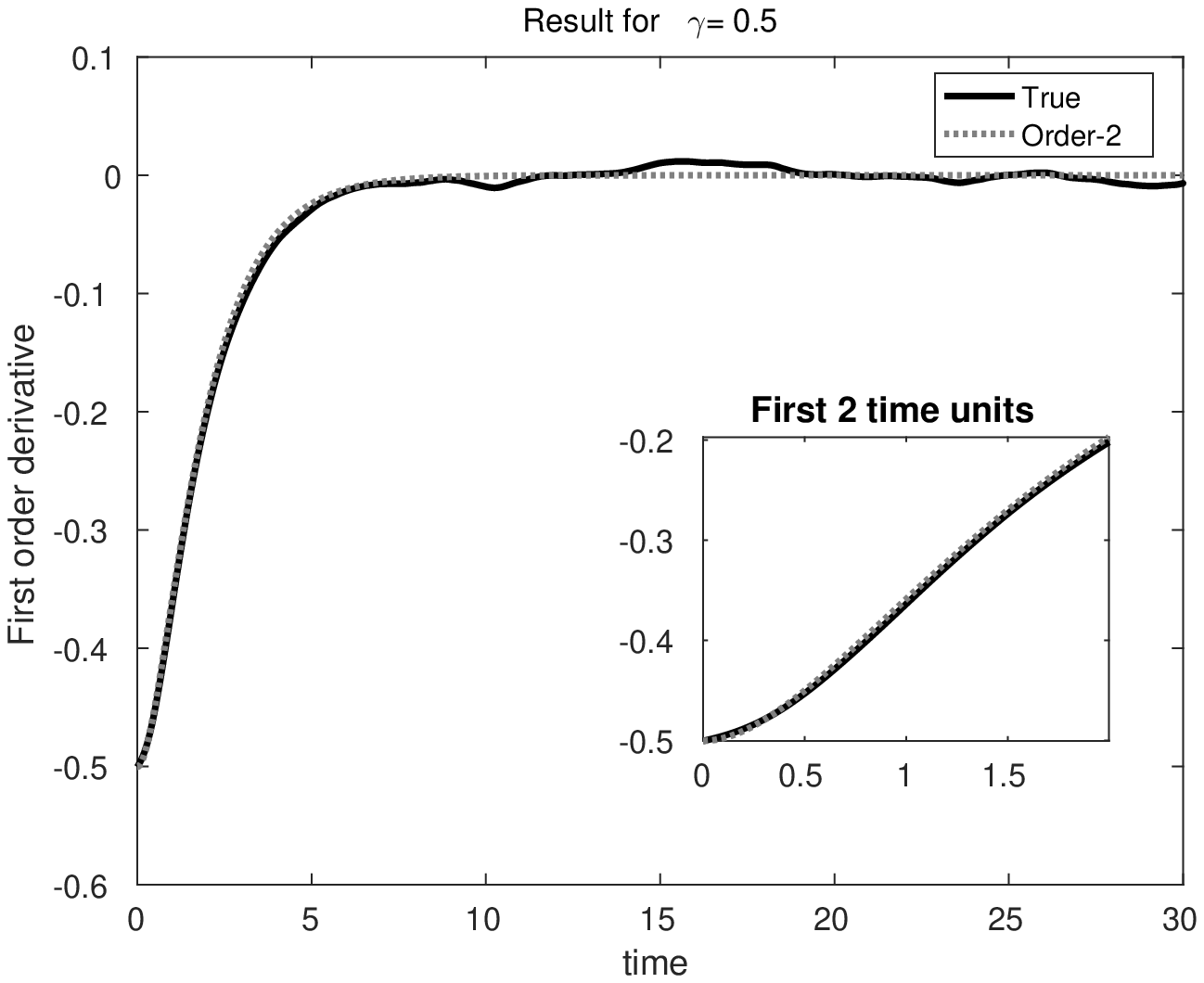}
\includegraphics[scale=0.4]{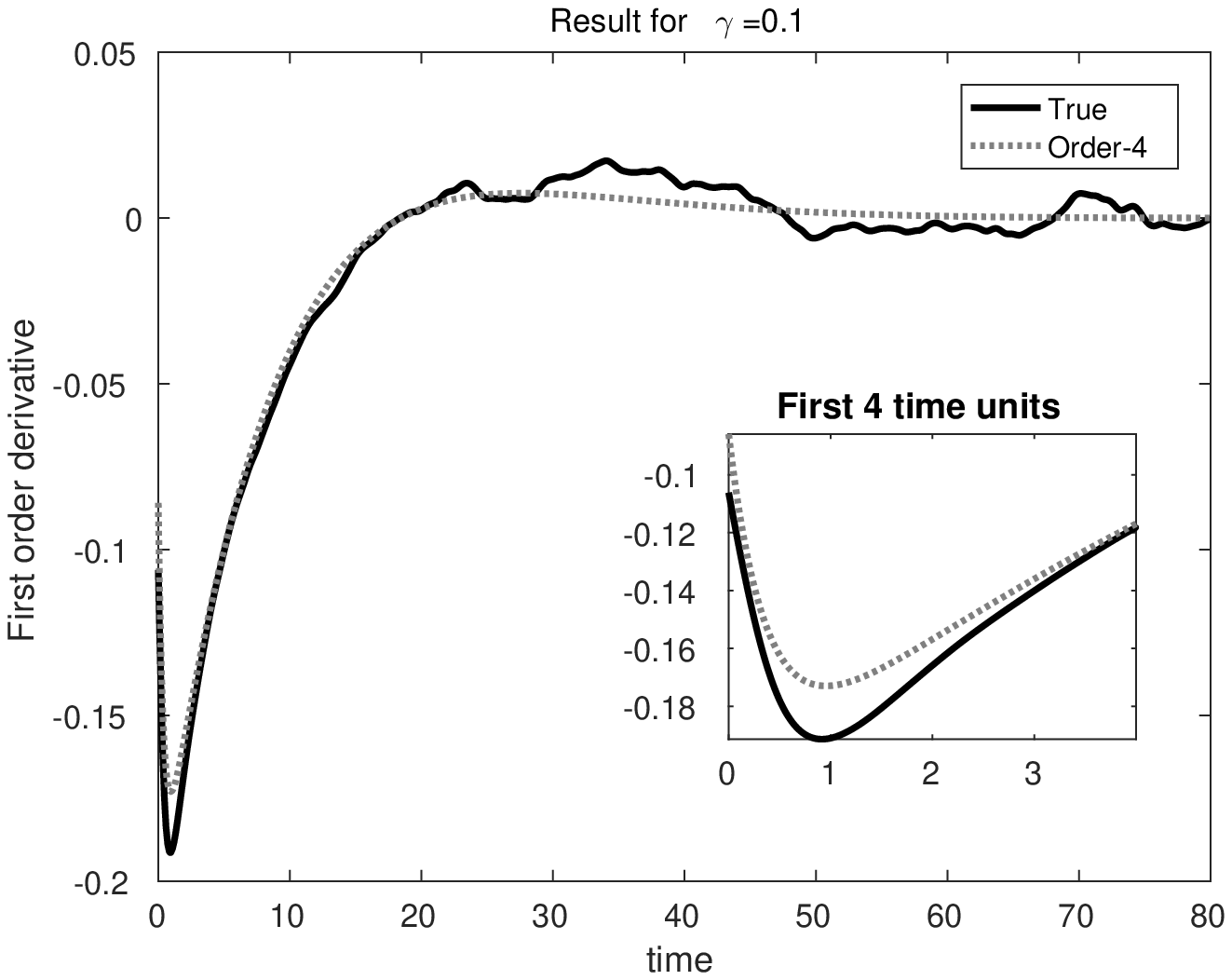}
\caption{The comparison of the first order derivative of response functions for $\gamma=0.5$ (left) and $\gamma=0.1$ (right).}
\label{fig: derv}
\end{center}
\end{figure}

\blue{Back to our parameter estimation strategy in \eqref{M0-M2}, we need to check the accuracy of $M_2$ based on these nonlinear least-square fitting. In general, when the underlying dynamics are unknown, the higher order derivatives can only be estimated from data, which are often subject to measurement or numerical error. Therefore, it is unlikely that the higher order essential statistics can be computed with good accuracy. To elaborate on this point, we check the accuracy of $M_1, M_2, M_3$ of our Langevin example where the data is generated from the weak trapezoidal method \cite{weak-trapezoildal-method}. The estimated $\{M_i\}$ are listed in Table~\ref{table2} below. As suspected, while the estimates for $M_1$ are reasonably accurate (as shown in Figure~\ref{fig: derv}), we can see that the higher order essential statistics, $M_2$ and $M_3$, are difficult to estimate. This issue might be related to the sampling and numerical error in the data used to compute the linear response operator $k_A(t)$. 

\smallskip
On the other hand, if we look at the essential statistics of the lower order, $k'(t_j)$, at {\it different} points, such as $t_j=5/2$ and $t_j=5$, they can be reasonably estimated with $g'_m(t_j)$, where $g_m$ is the rational approximation with parameters specified through the nonlinear least-squares fitting in \eqref{nls}. This suggests that in practice we may consider the fitting strategy in \eqref{mainfitting2}. In particular, consider the essential statistics $k_A(t_j)$ or $k'_A(t_j)$ where $t_j\neq 0$ in placed of the equation involving $M_2$ in \eqref{M0-M2}. For example, we can consider solving the first two equations in \eqref{M0-M2} for $\tilde{\gamma}, \tilde{T}$ and
\BEA
k_A(t_j) &=& \hat{k}_A(t_j) = \frac{1}{k_B T}\int_{\mathbb{R}^2}\int_{\mathbb{R}^2} 
v u \tilde{p}(x,v,t_j|y,u,0) \td x\td v\td y \td u, \label{katj}
\EEA
for $\tilde{\epsilon}$, where the last equality is from \eqref{kt}. Here, $v=v(\tilde{\epsilon},\tilde{\gamma},\tilde{T})$ on the right hand side of the third equation above, where we implicitly used the constraints in \eqref{axo} which imply the dependence $\tilde{x}_0(\epsilon)$ and $\tilde{a}(\tilde{\epsilon}$). Alternative to \eqref{katj}, we can solve,
\BEA
g'_m(t_j) \approx k'_A(t_j) = \hat{k}'_A(t_j) = \frac{1}{k_B T}\int_{\mathbb{R}^2}\int_{\mathbb{R}^2}
(-U'(x)-\tilde \gamma v)u \tilde{p}(x,v,t_j|y,u,0) \td x\td v\td y \td u \label{kaptj}
\EEA
for $\tilde{\epsilon}$, where the last equality is from \eqref{kprime}.

Here, an appropriate numerical method for solving either \eqref{katj} or \eqref{kaptj} is needed. In particular, the computational cost will be in sampling $\tilde{p}$ which depends on the parameters to be estimated. We will deal with this issue in our future report.}

\begin{table}
\caption{Comparison of the essential statistics $M_i$ and $g^{(i)}_m(0)$ for $i=1, 2, 3$, where $g_m$ is the rational approximation with parameters specified through the nonlinear least-squares fitting in \eqref{nls}.}
\begin{center}
{\renewcommand{\arraystretch}{2}
\begin{tabular}{c|cc|cc}
\hline
\multicolumn{3}{c}{$\qquad\gamma=0.5$} & \multicolumn{2}{c}{$\gamma=0.1$}\\
\hline 
 & True Value & Order-$2$ & True Value & Order-$4$ \\
\hline
$M_1$ & -0.5000 & -0.5002 & -0.1000  & -0.0859  \\
\hline
$M_2$ & 0.0861 & -0.0003 & -0.1539  & -0.2985  \\
\hline
$M_3$ & 0.0389 & 0.5851 & 0.0318 & 0.8714  \\
\hline
\end{tabular}}
\end{center}
\label{table2}
\end{table}

\begin{table}
\caption{Comparison of the essential statistics $k'(t_j)$ and $g'_m(t_j)$ at $t_j=\frac52, 5$ where $g_m$ is the rational approximation  \eqref{eq: ansatz} with parameters specified through the nonlinear least-squares fitting in \eqref{nls}.}
\begin{center}
{\renewcommand{\arraystretch}{2}
\begin{tabular}{c|cc|cc}\hline
 \multicolumn{3}{c}{$\qquad\quad\gamma=0.5$} & \multicolumn{2}{c}{$\gamma=0.1$}\\  \hline
& True Value & Order-2  & True Value & Order-4 \\ \hline
$k'_{A}(5/2)$ & -0.14792 & -0.14107 & -0.15220 & -0.14626 \\ 
$k'_{A}(5)$ &-0.02874 & -0.02403 &  -0.10052 & -0.09988 \\ \hline
\end{tabular}}
\end{center}
\label{table3}
\end{table}

\section{Concluding discussion}

{ This paper presents a new framework for model parameterization using linear response statistics. The proposed approach is motivated by the observation that in many applications, some of the model parameters cannot be determined solely based on the equilibrium statistics. In fact, we provided two examples in which some of the parameters do not even appear in the equilibrium probability density. We proposed to identify the parameters by using dynamic response properties. In essence, it is very similar to the impulse/response experimental techniques, where one introduces an external field and then infers the underlying structure using the observed response.   

For this purpose, we  made use of the fluctuation-dissipation theory, and we introduced the concept of essential statistics, a finite set of quantities that is capable of  representing the overall response function. 
With a simplified turbulence model, we demonstrated how the model parameters can be determined analytically from the essential statistics at one point $t=0$, and equally importantly, why these essential statistics are capable of representing the response function. From this example, we also learned that we may need to consider various observable functions to estimate all the of parameters in the model. 

For the second example, which is a mechanical model, the calculations are semi-analytical in the sense that some of the computations must be done numerically. While it is true that we can identify the model parameter by fitting to the essential statistics at $t=0$, we learned that these statistics are not sufficient to represent the response operator at the long time. As a result, essential statistics from intermediate time scales must be included. Practically, this also requires estimating the derivatives of the response functions at different times. We showed that we can approximate the essential statistics accurately at least up to the order-1 derivative by a nonlinear least-square fitting to the rational function in \eqref{eq: ansatz}. For accurate estimation of the higher order derivatives,  we suspect that the data needs to be noiseless and generated with high order numerical integration schemes, which is not feasible in general. As a remedy to this practical issue, we propose to match only up to the first-order derivatives at different points as in \eqref{mainfitting2} in placed of the higher-order derivatives in \eqref{mainfitting}. 

Notice that in deriving the expression of the essential statistics as functions of the model parameters, $\hat{k}^{(i)}(t)$, all we need is the generator $\tilde{\mathcal{L}}$ of the model \eqref{approxSDE} that we want to parameterize. This implies that our formulation can be applied even if we have no access to the underlying dynamics as in many real world problems. While the goal of this paper is to show consistency, that is, we consider only $\mathcal{L}(\theta)=\mathcal{\tilde{L}}(\theta)$, we will consider the case with model error, $\mathcal{L}(\theta)\neq\mathcal{\tilde{L}}(\theta)$, in our future study.
 
Finally, the most important issue that hasn't been covered in this paper is how to devise efficient numerical methods to solve the system of equations in \eqref{mainfitting2}. The difficulty is that evaluating these equations involves sampling of either $\tilde{p}_{eq}(\tilde{\theta})$ or $\tilde{p}(x,v,t|y,u,0;\tilde{\theta})$, which depends on the parameters to be determined. This means that if an iterative type scheme is used to solve the nonlinear equations in \eqref{mainfitting2}, at each step we need to solve the approximate model in \eqref{approxSDE} with new parameters. Clever numerical algorithms with fast convergence or those without solving the model in \eqref{approxSDE} repeatedly is needed. This
important scientific computational challenge will be pursued in our future study. 

\section*{Acknowledgements}
This research was supported by the NSF grant DMS-1619661. HZ was partially supported as a GRA under the NSF grant DMS-1317919. 

\section*{Appendix A: The computation of $M_2$ for the simplified turbulence model}

Here, we provide the computational detail for obtaining \eqref{M2result}. In particular,
\begin{equation}
\hat{k}''_{x}(0)=\frac{1}{\sigma_{eq}^{2}}\int_{\mathbb{R}^{3}}\tilde{\mathcal{L}}(\tilde{B}(x,x)+\tilde{L}x-\tilde{\Lambda}x)x^{\top} \tilde{p}_{eq}(x)\td x. \nonumber
\end{equation}
where $\tilde{B}(x,x)=(\tilde{B}_1x_2x_3,\tilde{B}_2x_1x_3,\tilde{B}_3 x_1x_2)$ and $\tilde{L}$, $\tilde{\Lambda}$ are $3\times 3$ matrices.
We introduce the following notations.

\begin{enumerate}
    \item Set $\tilde{D}(x)=\tilde{B}(x,x)+\tilde{L}x-\tilde{\Lambda}x$ which is the deterministic part of the system, and $\tilde{D}_{i}$ denotes the $i$th component of $\tilde{D}(x)$.
    \item Set $\tilde{C}=(\tilde{L}-\tilde{\Lambda})$, then we have $\tilde{D}(x)=\tilde{B}(x,x)+\tilde{C}x$, and $\tilde{C}_i$ denotes the $i${th} row of the matrix, which is a row vector.
\end{enumerate}

With these notations, we have the Jacobian matrix of $\tilde{D}(x)$ 
\begin{equation}
\nabla \tilde{D}(x)=\nabla \tilde{B}(x,x)+\nabla (\tilde{C}x)=\left(\begin{array}{ccc}
    0 & \tilde{B}_1 x_3 & \tilde{B}_1 x_2  \\
    \tilde{B}_2 x_3 & 0 & \tilde{B}_2 x_1  \\
    \tilde{B}_3 x_2 & \tilde{B}_3 x_1 & 0
\end{array}\right)+\tilde{C},\nonumber
\end{equation}
and the generator $\tilde{\mathcal{L}}$ acting on $f$ becomes,
\begin{equation}
\tilde{\mathcal{L}}f =\sum_{i=1}^{3}\tilde{D}_{i}\frac{\partial f}{\partial x_{i}}+\frac{\tilde{\sigma}^{2}}{2}\sum_{i,j=1}^{3}(\tilde{\Lambda})_{i,j}\frac{\partial^{2}f}{\partial x_{i}\partial x_{j}}.\nonumber
\end{equation}

In our case, $f=\tilde{D}(x)$. Since $\tilde{p}_{eq}\sim \mathcal{N}(0,\tilde{\sigma}_{eq}^{2})$, with $\tilde{\sigma}_{eq}^{2}=\frac{\tilde{\sigma}^{2}}{2}$, only the odd power terms in $\tilde{\mathcal{L}}\tilde{D}(x)$ contribute to $M_2$. Notice that the second order partial derivatives of $\tilde{D}(x)$ are constant, and thus we have,
\begin{equation}
\hat{k}''_{x}(0)=\frac{1}{\sigma_{eq}^{2}}\int_{\mathbb{R}^{3}}(\tilde{\mathcal{L}}\tilde{D}(x))x^{\top}\tilde{p}_{eq}\td x= \frac{1}{\sigma_{eq}^{2}}\int_{\mathbb{R}^{3}}\left(\sum_{i=1}^{3}\tilde{D}_{i}\frac{\partial \tilde{D}}{\partial x_{i}}\right)x^{\top}\tilde{p}_{eq}\td x. \nonumber
\end{equation}
For example, if $i=1$,
\begin{equation}
\begin{split}
\tilde{D}_{1}\frac{\partial \tilde{D}}{\partial x_1} &=(\tilde{B}_1x_2x_3+\tilde{C}_1 x)[(0,\tilde{B}_2x_3,\tilde{B}_3x_2)^{\top}+\tilde{C}_1^{\top}]\\
&=(0,\tilde{B}_1\tilde{B}_2x_2x_3^2,\tilde{B}_1\tilde{B}_3x_2^2x_3)^{\top}+\tilde{B}_1x_2x_3\tilde{C}_1^{\top}+\tilde{C}_1x(0,\tilde{B}_2x_3,\tilde{B}_3x_2)^{\top}+\tilde{C}_1x\tilde{C}_1^{\top}.
\end{split}
\nonumber
\end{equation}
Notice that both $\tilde{B}_1x_2x_3\tilde{C}_1^{\top}$ and $\tilde{C}_1x(0,\tilde{B}_2x_3,\tilde{B}_3x_2)^{\top}$ are even power terms which means that they do not affect the value of $M_2$. That is,
\begin{equation*}
\begin{split}
\int_{\mathbb{R}^{3}}\tilde{D}_{1}\frac{\partial \tilde{D}}{\partial x_1}x^{\top}\tilde{p}_{eq}\td x 
&= \int_{\mathbb{R}^{3}} \left(\begin{array}{c} 0 \\ \tilde{B}_1\tilde{B}_2 x_2x_3^2 \\ \tilde{B}_1\tilde{B}_3 x_2^2x_3 \end{array} \right)x^{\top}\tilde{p}_{eq}+\tilde{C}_{1}^{\top}\tilde{C}_{1}xx^{\top}\tilde{p}_{eq}\td x \\
&= \int_{\mathbb{R}^{3}} \left(\begin{array}{c c c} 0 & 0 & 0 \\ \tilde{B}_1\tilde{B}_2 x_1x_2x_3^2 & \tilde{B}_1\tilde{B}_2x_2^2x_3^2 & \tilde{B}_1\tilde{B}_2 x_2x_3^3 \\ \tilde{B}_1\tilde{B}_3 x_1x_2^2x_3 & \tilde{B}_1\tilde{B}_3 x_2^3x_3 & \tilde{B}_1\tilde{B}_3 x_2^2x_3^2 \end{array} \right)\tilde{p}_{eq} \td x+\tilde{C}_{1}^{\top}\tilde{C}_{1}\int_{\mathbb{R}^{3}}xx^{\top}\tilde{p}_{eq}\td x \\
& = \tilde{\sigma}_{eq}^4 \left(\begin{array}{c c c} 0 & 0 & 0 \\ 0 & \tilde{B}_1\tilde{B}_2 & 0 \\ 0 & 0 & \tilde{B}_1\tilde{B}_3 \end{array} \right)+\tilde{\sigma}_{eq}^2\tilde{C}_{1}^{\top}\tilde{C}_{1}.
\end{split}
\end{equation*}
Similarly, for $i=2$ and $i=3$, we have
\begin{equation*}
\begin{split}
\int_{\mathbb{R}^{3}}\tilde{D}_{2}\frac{\partial \tilde{D}}{\partial x_2}x^{\top}\tilde{p}_{eq}\td x =  \tilde{\sigma}_{eq}^4 \left(\begin{array}{c c c} \tilde{B}_1\tilde{B}_2 & 0 & 0 \\ 0 & 0 & 0 \\ 0 & 0 & \tilde{B}_2\tilde{B}_3 \end{array} \right)+\tilde{\sigma}_{eq}^2\tilde{C}_{2}^{\top}\tilde{C}_{2}, \\
\int_{\mathbb{R}^{3}}\tilde{D}_{3}\frac{\partial \tilde{D}}{\partial x_3}x^{\top}\tilde{p}_{eq}\td x =  \tilde{\sigma}_{eq}^4 \left(\begin{array}{c c c} \tilde{B}_1\tilde{B}_3 & 0 & 0 \\ 0 & \tilde{B}_2\tilde{B}_3 & 0 \\ 0 & 0 & 0 \end{array} \right)+\tilde{\sigma}_{eq}^2\tilde{C}_{3}^{\top}\tilde{C}_{3},
\end{split}
\end{equation*}
respectively.
Thus,
\begin{equation*}
\hat{k}''_{x}(0)=\frac{\tilde{\sigma}_{eq}^{2}}{\sigma_{eq}^{2}}\left[\tilde{\sigma}_{eq}^{2}\left(\begin{array}{c c c} \tilde{B}_1\tilde{B}_2+\tilde{B}_1\tilde{B}_3 & 0 & 0 \\ 0 & \tilde{B}_1\tilde{B}_2+\tilde{B}_2\tilde{B}_3 & 0 \\ 0 & 0 & \tilde{B}_1\tilde{B}_3+\tilde{B}_2\tilde{B}_3 \end{array} \right)+\tilde{C}^{2}\right].
\end{equation*}
Furthermore, since $\sum \tilde{B}_i=0$, we arrive at,
\begin{equation}
\hat{k}''_{x}(0)=\frac{\tilde{\sigma}_{eq}^{2}}{\sigma_{eq}^{2}}[-\tilde{\sigma}_{eq}^{2} \text{diag}(\tilde{B}_1^2, \tilde{B}_2^2, \tilde{B}_3^2 )+\tilde{C}^2],\nonumber
\end{equation}
which is the stated result.

\section*{Appendix B: The computation of $M_4$ and $M_5$ for the Langevin dynamics model}

\blue{
Here we  show how to obtain the formulas for $\hat k^{(4)}_A(0)$ and $\hat k^{(5)}_A(0)$ for the Langevin dynamics model. Recall that the generator is given by,  
\begin{equation}
\tilde{\mathcal{L}} =  v\frac{\partial}{\partial x} + (-U'(x)-\tilde{\gamma}v) \frac{\partial}{\partial v} + \tilde\gamma k_B\tilde{T} \frac{\partial^2}{\partial v^2}.\nonumber
\end{equation}
Furthermore, we have the response operator,
\begin{equation}
\hat{k}_{A}(t) = \frac{1}{k_B T}\int_{\mathbb{R}^2}\int_{\mathbb{R}^2} 
v u \tilde{p}(x,v,t|y,u,0) \td x\td v\td y \td u.\nonumber
\end{equation}
Direct calculations yield,
\begin{equation*}
\begin{split}
& \hat{k}_{A}'(t)
=  \frac{1}{k_B T}\int_{\mathbb{R}^2}\int_{\mathbb{R}^2}
(-U'(x)-\tilde \gamma v)u \tilde{p}(x,v,t|y,u,0) \td x\td v\td y\td u,\\
& \hat{k}_{A}''(t)= \frac{1}{k_B T}\int_{\mathbb{R}^2}\int_{\mathbb{R}^2}
(-vU''(x)+\tilde \gamma U'(x)+\tilde \gamma^{2} v) u
 \tilde{p}(x,v,t|y,u,0) \td x\td v\td y \td u.
\end{split}
\end{equation*}

From fitting $M_0$, we have $T=\tilde{T}.$ Now we applying the generator $\tilde{\mathcal{L}}$ again and we obtain
\begin{equation*}
    \hat k'''_A=\frac{1}{k_B T}\int (-v^2 U'''(x)+2\tilde{\gamma}v U''(x)+U'(x)U''(x)-\tilde{\gamma}^{3}U'(x)-\tilde{\gamma}^3v)u \tilde{p}(x,v,t|y,u,0)\td x \td v\td y\td u,
\end{equation*}
which leads to $\hat k'''_A(0)=2\tilde{\gamma}\mathbb{E}_{eq}(U''(x))-\tilde{\gamma}^3$. And applying the generator again we have
\begin{equation*}
    \hat k^{(4)}_A=\frac{1}{k_B T}\int f(x,v)u \tilde{p}(x,v,t|y,u,0)\td x \td v\td y\td u,
\end{equation*}
where $
f(x,v)=-v^3U^{(4)}(x)+4\tilde{\gamma}v^2U'''(x)+v(U''(x))^{2}+3vU'(x)U'''(x)-2\tilde{\gamma}k_B\tilde TU'''(x)-2\tilde{\gamma}U'(x)U''(x)-3\tilde{\gamma}^{2}vU''(x)+\tilde{\gamma}^{3}U'(x)+\tilde{\gamma}^{4}v$.
This formula leads to
\begin{equation*}
\hat k^{(4)}_A(0)=-\frac{1}{k_B\title{T}}\mathbb{E}_{eq}(v^4)\mathbb{E}_{eq}U^{(4)}(x)+\mathbb{E}_{eq}((U''(x))^2)+3\mathbb{E}_{eq}(U'(x)U'''(x))-3\tilde{\gamma}^2\mathbb{E}_{eq}(U''(x))+\tilde{\gamma}^4.
\end{equation*}
For $\hat k_A^{(5)}(0)$ we have
\BEA
\hat k_A^{(5)}(0)&=&\frac{7\tilde{\gamma}}{k_BT}\mathbb{E}_{eq}(v^4)\mathbb{E}_{eq}U^{(4)}(x)-8\tilde{\gamma}k_BT\mathbb{E}_{eq}(U^{(4)}(x))-3\tilde{\gamma}\mathbb{E}_{eq}((U''(x))^2) \nonumber\\ &&-13\tilde{\gamma}\mathbb{E}_{eq}(U'(x)U'''(x))+4\tilde{\gamma}^3\mathbb{E}_{eq}(U''(x))-\tilde{\gamma}^5.\nonumber
\EEA
Thus these formulas require the fourth moment of $v$, $\mathbb{E}_{eq}(U'U''')$, $\mathbb{E}_{eq}(U^{(4)})$, $\mathbb{E}_{eq}((U'')^2)$ and $\mathbb{E}_{eq}(U'')$, in order to compute $\hat k_A^{(4)}(0)$ and $\hat k_A^{(5)}(0)$.

Notice $v\sim\mathcal{N}(0,k_{B}T)$, thus $\mathbb{E}_{eq}(v^4)=3(k_B  T)^2$. Therefore, we can rewrite the two formula into
\begin{equation}
\begin{aligned}
\hat k^{(4)}_A(0)=&-3k_BT\mathbb{E}_{eq}U^{(4)}(x)+\mathbb{E}_{eq}((U''(x))^2)+3\mathbb{E}_{eq}(U'(x)U'''(x))-3\tilde{\gamma}^2\mathbb{E}_{eq}(U''(x))+\tilde{\gamma}^4, \\
\hat k_A^{(5)}(0)=&13\tilde{\gamma}k_BT\mathbb{E}_{eq}(U^{(4)}(x))-3\tilde{\gamma}\mathbb{E}_{eq}((U''(x))^2)-13\tilde{\gamma}\mathbb{E}_{eq}(U'(x)U'''(x))+4\tilde{\gamma}^3\mathbb{E}_{eq}(U''(x))-\tilde{\gamma}^5.
\end{aligned}\nonumber
\end{equation}
These are the formulas implemented in producing the results in Fig. 2.

}



\end{document}